\DeclareMathOperator\co{{\it\mathbb C}}
\DeclareMathOperator{\ba}{\mathbb B}
\newtheorem{theorem}{Theorem}[section]
\newtheorem{lemma}[theorem]{Lemma}
\newtheorem{prop}[theorem]{Proposition}
\newtheorem{corollary}[theorem]{Corollary}
\theoremstyle{definition}
\newtheorem{definition}[theorem]{Definition}
\theoremstyle{remark}
\newtheorem{remark}[theorem]{Remark}
\numberwithin{equation}{section}
\begin{document}

\title[Complex Monge-Amp\`ere equation on Hermitian manifolds]{Weak solutions to the complex Monge-Amp\`ere  equation on Hermitian manifolds }


\author{S\l awomir Ko\l odziej}
\address{Faculty of Mathematics and Computer Science, Jagiellonian University 30-348 Krak\'ow, \L ojasiewicza 6, Poland.}
\email{Slawomir.Kolodziej@im.uj.edu.pl}

\author{Ngoc Cuong Nguyen}
\address{Faculty of Mathematics and Computer Science, Jagiellonian University 30-348 Krak\'ow, \L ojasiewicza 6, Poland.}
\email{Nguyen.Ngoc.Cuong@im.uj.edu.pl}





\maketitle

\quad \quad {\em Dedicated to Duong H. Phong  on the occasion of his 60th birthday}

\bigskip

\begin{abstract}
The main result asserts the existence of continuous solutions of the complex Monge-Amp\`ere  equation
with the right hand side in $L^p , p>1$, on compact Hermitian manifolds.

\end{abstract}

\section*{Introduction}

Let $(X,\omega)$ be a compact Hermitian manifold of complex dimension $n$. We study the 
weak solutions to the complex Monge-Amp\`ere equation
\[
	(\omega + dd^c \varphi)^n = f \omega^n,	\quad
	\omega + dd^c \varphi \geq 0,
\]
where $0\leq f \in L^p(X, \omega^n)$, $p>1$, and $d^c= \frac{i}{2\pi} (\bar\partial - \partial)$, $dd^c=\frac{i}{\pi}\partial \bar\partial$, with the displayed inequality understood in the
 sense of currents.

We follow the pluripotential approach introduced by 
S. Dinew and the first author in \cite{DK}, where $L^{\infty }$ estimates for the above equation
were obtained. Here we refine those estimates and prove the existence of continuous solutions.

\begin{theorem}
Let $(X, \omega)$ be a compact Hermitian manifold, $dim \,  X =n$.
Let $0 \leq f \in L^p(X, \omega^n)$, $p>1$, be such that $\int_X f \omega^n >0$.
There exist a constant $c>0$ and a function $u\in C(X)$ satisfying the
equation
\[
	(\omega + dd^c u)^n = c f \, \omega^n, \quad \omega + dd^c u \geq 0,
\]
in the weak sense.
\end{theorem}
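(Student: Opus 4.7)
The plan is to construct $u$ as a uniform limit of solutions to smooth approximating equations. Choose smooth positive $f_j \to f$ in $L^p(X,\omega^n)$ with $\int_X f_j\,\omega^n = \int_X f\,\omega^n$. For each $j$ the Hermitian Calabi--Yau theorem of Tosatti--Weinkove yields a smooth pair $(u_j,c_j)$ with $\sup_X u_j = 0$ solving $(\omega+dd^c u_j)^n = c_j f_j\,\omega^n$. The $L^\infty$ estimate of \cite{DK} then gives a uniform bound $\|u_j\|_\infty \le C$ depending only on $\|f_j\|_{L^p}$ and on $(X,\omega)$, and combined with the normalisation $\int_X f_j\,\omega^n = \int_X f\,\omega^n > 0$ yields two-sided bounds $0 < c^{-1} \le c_j \le c$.

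The central new ingredient is a uniform modulus of continuity for the family $\{u_j\}$. I would sharpen the capacity/comparison estimates of \cite{DK}, which rely on the comparison principle and on volume bounds for sublevel sets of the form $\{u_j < -s\}$, into a quantitative stability bound
\[
\sup_X (v - u_j) \;\le\; \eta\bigl(\|v - u_j\|_{L^1}\bigr)
\]
valid for bounded quasi-psh competitors $v$, with $\eta(t)\to 0$ as $t\to 0$ uniformly in $j$. Applying this to $v = \widetilde{u}_{j,\delta}$, a chart-wise regularisation of $u_j$ that is only $(1+A\delta)\omega$-psh after a slight inflation of the background class to absorb the torsion of $\omega$, would bound the asymmetry between $u_j$ and $\widetilde{u}_{j,\delta}$ by a modulus of $\delta$ independent of $j$, hence force equicontinuity of the family.

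The main obstacle I anticipate is exactly this equicontinuity step: in the Hermitian setting the standard regularisations of an $\omega$-psh function are not $\omega$-psh but only $(1+A\delta)\omega$-psh, so one must carefully trace how the torsion loss propagates through the capacity comparison, and, if one wants to re-solve an auxiliary Monge--Amp\`ere equation to stay within the class, through the Tosatti--Weinkove machinery in a non-K\"ahler setting. Once equicontinuity is granted, Arzel\`a--Ascoli extracts a subsequence $u_j \to u$ uniformly, with $u$ continuous and $\omega + dd^c u \ge 0$, and $c_j \to c > 0$ along the same subsequence. Passing to the limit in the equation is then standard: uniform convergence of uniformly bounded $\omega$-psh functions implies weak convergence of their Monge--Amp\`ere currents (the Bedford--Taylor continuity result, valid also in the Hermitian setting as used in \cite{DK}), while $c_j f_j \omega^n \to c f\,\omega^n$ in $L^1$, yielding the equation $(\omega+dd^c u)^n = c f\,\omega^n$ in the weak sense of currents.
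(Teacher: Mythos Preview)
Your overall architecture---approximate $f$ by smooth positive $f_j$, solve via Tosatti--Weinkove, extract uniform $L^\infty$ bounds, pass to the limit---matches the paper's. The divergence is in how you obtain uniform convergence, and here the paper takes a route that sidesteps exactly the obstacle you flag.

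You propose equicontinuity via a stability bound applied to chart-wise regularisations $\widetilde u_{j,\delta}$, then Arzel\`a--Ascoli. As you note, $\widetilde u_{j,\delta}$ is only $(1+A\delta)\omega$-psh, and the proposal does not explain how to absorb this loss through the modified comparison principle; this is a genuine gap as it stands. The paper avoids regularisation altogether: the stability bound you write, $\sup_X(v-u_j)\le\eta(\|v-u_j\|_{L^1})$, is essentially what the paper proves (via Theorem~\ref{kappa} and Lemma~\ref{capacity}), but it is applied with $v=u_k$, another member of the approximating sequence. Both $u_j$ and $u_k$ are genuinely $\omega$-psh with Monge--Amp\`ere measures uniformly in $L^p$, so the comparison works cleanly. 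After passing to a subsequence Cauchy in $L^1(\omega^n)$ (available by $L^\infty$-boundedness of the family), the stability upgrades this directly to Cauchy in $C(X)$---no equicontinuity, no Arzel\`a--Ascoli, no torsion-inflated background metric.

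A smaller point: your argument for two-sided bounds on $c_j$ is circular. The $L^\infty$ estimate for $u_j$ depends on $\|c_jf_j\|_{L^p}$, not $\|f_j\|_{L^p}$, so you cannot use it before bounding $c_j$; and in the Hermitian case the normalisation $\int_X f_j\,\omega^n=\int_X f\,\omega^n$ does not pin down $c_j$ (there is no cohomological identity $\int_X\omega_{u_j}^n=\int_X\omega^n$). The paper bounds $c_j$ from above by integrating the pointwise AM--GM inequality $(\omega+dd^cu_j)\wedge\omega^{n-1}\ge (c_jf_j)^{1/n}\omega^n$ and controlling $\int_X dd^cu_j\wedge\omega^{n-1}$ via Stokes and the uniform $L^1$ bound on $u_j$; the bound from below comes from the capacity estimate (if $c_j\to 0$ then $cap_\omega$ of a nonempty sublevel set would be forced to $0$, contradicting Theorem~\ref{kappa}).
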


The main tool is a generalized version of
the comparison principle due to Bedford-Taylor \cite{BT1, BT2}. We call it \it modified comparison principle \rm
just for a convenient reference.
In its formulation we use a constant $B > 0$ such that
\begin{equation}
\label{curvature}
\begin{cases}
	-B \omega^2 \leq 2 n dd^c \omega \leq B \omega^2, \\
	-B \omega^3 \leq 4 n^2 d\omega \wedge d^c \omega \leq B \omega^3. 
\end{cases} 
\end{equation}
We denote by $PSH(\omega)$  the set of $\omega$-plurisubharmonic functions on $X$ (see Section~\ref{preliminary}).
 
\begin{theorem}[modified comparison principle]
Let $(X,\omega)$ be a compact Hermitian manifold and
suppose that $\varphi, \psi \in PSH(\omega)\cap L^\infty(X)$. Fix $0 < \varepsilon < 1$ and set
$m(\varepsilon) = \inf_X [ \varphi - (1-\varepsilon) \psi]$. 
Then, for any $0 < s < \frac{\varepsilon^3}{16 B}$, we have 
\[
	\int_{ \{ \varphi < (1-\varepsilon) \psi + m(\varepsilon) + s\}}
		\omega_{(1-\varepsilon) \psi}^n
	\leq (1 +  \frac{s}{\varepsilon^n} \, C)
	 \int_{ \{ \varphi < (1-\varepsilon) \psi + m(\varepsilon) + s\}}
		\omega_{\varphi}^n,
\]
where $C$ is a uniform constant depending only on $n, B$.
\end{theorem}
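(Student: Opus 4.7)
The plan is to run a Bedford-Taylor-style comparison with the auxiliary $\omega$-plurisubharmonic function $v=\max(\varphi,\tilde\psi+s-t)$, where $\tilde\psi:=(1-\varepsilon)\psi+m(\varepsilon)$ and $0<t<s$ is an approximation parameter to be sent to zero at the end. The decisive elementary input is the identity $\omega_{\tilde\psi}=\varepsilon\omega+(1-\varepsilon)\omega_\psi\geq\varepsilon\omega$, which permits every $\omega$-factor in a later estimate to be replaced by an $\omega_{\tilde\psi}$-factor at a price of $\varepsilon^{-1}$; this is the geometric origin of the $\varepsilon^{-n}$ in the claim. By construction $w:=v-\varphi$ is nonnegative with $\|w\|_\infty\leq s-t$ and support in $\overline{U_{s-t}}$, writing $U_r=\{\varphi<\tilde\psi+r\}$; on the open set $U_{s-t}$ one has $v=\tilde\psi+s-t$ and hence $\omega_v=\omega_{\tilde\psi}$, while outside $\overline{U_{s-t}}$ one has $\omega_v=\omega_\varphi$.

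These local identifications give
\[
\int_{U_{s-t}}\omega_{\tilde\psi}^n-\int_{U_{s-t}}\omega_\varphi^n\;\leq\;\int_X\omega_v^n-\int_X\omega_\varphi^n,
\]
and the algebraic telescoping $\omega_v^n-\omega_\varphi^n=dd^cw\wedge T$ with $T=\sum_{k=0}^{n-1}\omega_v^k\wedge\omega_\varphi^{n-1-k}$, combined with integration by parts on the closed manifold $X$ (standard once $v$ and $\varphi$ are regularised in the Bedford-Taylor sense), transforms the right-hand side into $\int_X w\,dd^cT$.

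The crucial step is to bound $dd^cT$. Because $dd^c(dd^cu)=0$ for every potential $u$, the only surviving contributions in the Leibniz expansion come from derivatives landing on the $\omega$-factors, and the curvature hypothesis \eqref{curvature} then gives
\[
|dd^cT|\leq c_nB\Bigl(\omega^2\wedge\!\!\!\sum_{a+b=n-2}\!\!\!\omega_v^a\wedge\omega_\varphi^b\;+\;\omega^3\wedge\!\!\!\sum_{a+b=n-3}\!\!\!\omega_v^a\wedge\omega_\varphi^b\Bigr).
\]
Using $\mathrm{supp}(w)\subset\overline{U_{s-t}}$, $\|w\|_\infty\leq s$, $\omega_v=\omega_{\tilde\psi}$ on $U_{s-t}$, and then $\omega\leq\varepsilon^{-1}\omega_{\tilde\psi}$ to absorb all remaining $\omega$-factors, reduces the integral to a sum of mixed Monge-Amp\`ere masses $\int_{U_{s-t}}\omega_{\tilde\psi}^j\wedge\omega_\varphi^{n-j}$ scaled by constants of the form $c_nBs\varepsilon^{-i}$. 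To collapse these mixed masses onto the pure quantity $\int_{U_s}\omega_\varphi^n$, I would iterate exactly the same max-and-Stokes device $n$ times, each round trading a single $\omega_{\tilde\psi}$-factor for an $\omega_\varphi$-factor; the cumulative multiplicative cost comes out as $1+Cs/\varepsilon^n$. Sending $t\to 0$ at the end replaces $U_{s-t}$ by $U_s$.

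The main obstacle is making this bootstrap quantitative. One must bookkeep the Leibniz expansion so that $C$ depends only on $n$ and $B$, and in the iterative replacement of $\omega_{\tilde\psi}$-factors by $\omega_\varphi$-factors one must verify that the error incurred at each round is strictly smaller than the running estimate, so that the geometric series closes; this is precisely where the smallness condition $s<\varepsilon^3/(16B)$ enters.
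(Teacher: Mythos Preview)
Your proposal is correct and follows essentially the same strategy as the paper: a Bedford--Taylor max-construction on the sublevel set (this is exactly what underlies the paper's Lemma~\ref{weak-cp1}), the key pointwise inequality $\omega\leq\varepsilon^{-1}\omega_{(1-\varepsilon)\psi}$, and an iterative reduction of mixed Monge--Amp\`ere masses that closes precisely under $s<\varepsilon^3/(16B)$.

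The one bookkeeping difference worth noting is the choice of iteration variable. The paper first invokes Lemma~\ref{weak-cp2} (from \cite{DK}), which hands back the error already in the form $\sum_{k=0}^{n-1}a_k$ with $a_k:=\int_{U(\varepsilon,s)}\omega_\varphi^k\wedge\omega^{n-k}$, and then applies Lemma~\ref{weak-cp1} together with $\varepsilon\,\omega\leq\omega_{(1-\varepsilon)\psi}$ once per step to obtain the explicit three-term recursion
\[
\varepsilon\,a_k \leq a_{k+1}+\delta B\,(a_k+a_{k-1}+a_{k-2}),\qquad \delta=\tfrac{\varepsilon^3}{16B},
\]
which is solved by hand to give $a_k\leq C\varepsilon^{-n}a_n$. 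Your iteration runs instead on $b_j:=\int_U\omega_{\tilde\psi}^{\,j}\wedge\omega_\varphi^{n-j}$ and absorbs the stray $\omega$-factors from $dd^cT$ back into $\omega_{\tilde\psi}$. Both schemes close; the paper's choice has the minor advantage that the extra $\omega$-powers produced by the curvature bounds are already the running variable, so the recursion feeds only into \emph{lower} indices and the origin of the threshold $\varepsilon^3/(16B)$ becomes transparent (three lower-order terms, each costing one power of $\varepsilon$). In your scheme the absorbed $\omega\leq\varepsilon^{-1}\omega_{\tilde\psi}$ pushes the error toward \emph{higher} $j$, so the ``geometric series closes'' step you flag is genuinely the place where care is needed---but it does close, for the same reason.
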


It was shown in \cite{DK} that the comparison principle which is valid on K\"ahler manifolds (see \cite{ko03})
is no longer true on general Hermitian manifolds.

The complex Monge-Amp\`ere equation on complex Hermitian manifolds was first
studied by Cherrier \cite{Ch, CH1, CH2} and  Hanani  \cite{H1, H2}.
 There has been a renewed interest recently in the works of Guan - Li \cite{GL} and  Tosatti - Weinkove \cite{TW1, TW2}.
The breakthrough was made by Tosatti and Weinkove \cite{TW2} who proved the existence and uniqueness of the smooth 
solution to  complex Monge-Amp\`ere equation on a general compact Hermitian manifold. 
Since then more papers appeared (e.g., \cite{Gil1}, \cite{GS}, \cite{Nie}, \cite{Sun}, \cite{ZZ}), some
 in relation to  the Chern-Ricci flow.
It was shown in \cite{Gil1, Gil2}, \cite{TW3, TW4, TWYang} that the flow enjoys many
 common properties with the K\"ahler-Ricci flow. 
In the study of the latter the weak solutions 
of the complex Monge-Amp\`ere equation play an important role, and thus the investigation of
the Hermitian case seems to be well motivated.

The method based on the modified comparison principle can also be applied 
in the case when $X= \Omega$  is a bounded open set in $\co^n$. We consider the Dirichlet 
problem for the Monge-Amp\`ere operator 
and generalize the stability
estimates \cite{ko96}  from the K\"ahler setting  to the Hermitian one.
\begin{corollary}
Consider $\Omega$ be a bounded open set in $\co^n$ and  $\omega$ be a Hermitian metric
in $\co^n$. Let  $u,v \in PSH( \omega) \cap C(\bar \Omega)$ be such that
 \[
 (\omega + dd^c u)^n =f \omega^n, \quad  
(\omega + dd^c v)^n = g \omega^n,
\]
with $0 \leq f, g \in L^p(\Omega, \omega^n)$, $p>1$.
Then 
\[
	\| u - v \|_{L^\infty(\bar\Omega)} 
			\leq \sup_{\partial \Omega} |u-v| 
				+ C \| f-g\|_{L^p(\omega^n)}^\frac{1}{n},
\]
where C depends only on $\Omega$, $\omega$ and $p$.
\end{corollary}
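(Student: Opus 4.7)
Our plan is to adapt Ko{\l}odziej's classical stability argument \cite{ko96} from the K\"ahler setting to the Hermitian one, substituting the modified comparison principle for Bedford-Taylor throughout. By symmetry of the conclusion under $(u,f)\leftrightarrow(v,g)$, it suffices to control $\sup_\Omega(v-u)$. Replacing $v$ by $v-\sup_{\partial\Omega}(v-u)_+$ reduces to the case $v\leq u$ on $\partial\Omega$ and contributes exactly the boundary term on the right-hand side. Set $M:=\sup_\Omega(v-u)$ and assume $M>0$; the goal is to prove $M\leq C\|f-g\|_p^{1/n}$, where $\|\cdot\|_p:=\|\cdot\|_{L^p(\omega^n)}$.

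Apply the domain version of the modified comparison principle with $\varphi=u$ and $\psi=v$. Writing $m(\varepsilon):=\inf_\Omega[u-(1-\varepsilon)v]$ and
\[
U(\varepsilon,s):=\{u<(1-\varepsilon)v+m(\varepsilon)+s\},
\]
the hypothesis $v\leq u$ on $\partial\Omega$ guarantees that $U(\varepsilon,s)$ is compactly contained in $\Omega$ whenever $s<M-\varepsilon\,\mathrm{osc}_\Omega v$, so that the principle applies. Using the pointwise expansion
\[
\omega_{(1-\varepsilon)v}^n=\(\varepsilon\omega+(1-\varepsilon)\omega_v\)^n\geq \varepsilon^n\omega^n+(1-\varepsilon)^n g\,\omega^n,
\]
the elementary bound $1-(1-\varepsilon)^n\leq n\varepsilon$, and H\"older's inequality, one then obtains
\[
\varepsilon^n\,\mathrm{Vol}\bigl(U(\varepsilon,s)\bigr)^{1/p}\leq C\(\|f-g\|_p+\varepsilon+s\varepsilon^{-n}\),
\]
with $C$ depending on $p$, $\Omega$, $\omega$, $\|f\|_p$, $\|g\|_p$. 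A direct computation using $m(\varepsilon)\geq -M+\varepsilon\inf_\Omega v$ further yields the key inclusion $U(\varepsilon,s)\supseteq\{v-u>M-(s-\varepsilon\,\mathrm{osc}_\Omega v)\}$, which locates the super-level sets of $v-u$ inside $U$.

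The concluding step is a Ko{\l}odziej-style iteration on the relative Monge-Amp\`ere capacity of the super-level sets $\{v-u>t\}$, now driven by the modified comparison principle in place of Bedford-Taylor. Choosing $s\sim\varepsilon^{n+1}$ to balance the last two error terms and optimizing $\varepsilon$ against $\|f-g\|_p$ then delivers $M^n\leq C\|f-g\|_p$. The main obstacle is precisely this iteration in the Hermitian setting: the extra factor $(1+sC\varepsilon^{-n})$ from the modified comparison principle injects a small error at every step, and one must verify that the iteration still closes, with constants controlled by the curvature constant $B$ from~(\ref{curvature}). A subsidiary point is that the proof of the modified comparison principle has to be reworked on a bounded open set, exploiting the fact that the test sets $U(\varepsilon,s)$ are compactly supported in $\Omega$ in place of the compactness of the ambient manifold.
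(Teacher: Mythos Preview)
Your approach is not wrong in spirit, but it is far more laborious than the paper's, and the sketch leaves the hardest part---closing the iteration with the Hermitian error term---unfinished. The paper avoids this entirely by a clever reduction to the \emph{classical} K\"ahler result.

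Here is the paper's argument. Enclose $\Omega$ in a ball $B(0,R)$ and write $\omega^n=h\,\beta^n$ with $\beta=dd^c\|z\|^2$ and $h$ smooth and positive on $\bar B(0,R)$; extend $f,g$ by zero. By \cite{ko96} there is $w\in PSH(B(0,R))\cap C(\bar B(0,R))$ with $(dd^c w)^n=|fh-gh|\,\beta^n$, $w=0$ on $\partial B(0,R)$, and
\[
\|w\|_{L^\infty}\leq C_1\|fh-gh\|_{L^p(\beta^n)}^{1/n}\leq C\|f-g\|_{L^p(\omega^n)}^{1/n}.
\]
Since $w$ is genuinely psh, $u+w\in PSH(\omega)$ and
\[
(\omega+dd^c(u+w))^n\geq \omega_u^n+(dd^c w)^n=f\omega^n+|f-g|\,\omega^n\geq g\,\omega^n.
\]
Because $w\leq 0$ on $\Omega$, the domination principle (Corollary~\ref{domination-principle}) applied to $\varphi:=u+w$ and $\psi:=v+\sup_{\partial\Omega}|u-v|$ gives $u+w\leq v+\sup_{\partial\Omega}|u-v|$, i.e.\ $v-u\leq -w+\sup_{\partial\Omega}|u-v|$. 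The reverse inequality follows by symmetry.

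So the only Hermitian ingredient needed is the domination principle, which has already been extracted from the modified comparison principle; the delicate $L^\infty$--$L^p$ estimate is borrowed wholesale from the K\"ahler case via the auxiliary potential $w$. Your plan instead reproves that estimate from scratch in the Hermitian setting, which forces you to carry the $(1+sC\varepsilon^{-n})$ error through the whole iteration. That may well be doable---the paper does something analogous in Section~\ref{apriori} for the compact case---but you flag it yourself as the main obstacle and do not resolve it, whereas the paper sidesteps it in three lines.
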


Thanks to the domination principle and the stability estimate the Dirichlet problem for
Monge-Amp\`ere operator (with the background metric $\omega$) is solvable for the right hand side in $L^p$, $p>1$.

\begin{corollary}
There exists a unique continuous solution to the Dirichlet
problem \eqref{dirichlet-problem} in a $C^\infty$ strictly pseudoconvex domain.
\end{corollary}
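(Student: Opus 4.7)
The plan is to obtain the continuous solution as the uniform limit of a sequence of solutions with smoother right--hand sides, using the stability estimate of the previous corollary to drive convergence and the domination principle (invoked in the paragraph preceding the statement) to secure uniqueness.

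First, I would approximate the density by a sequence of smooth, strictly positive $f_j$ on $\bar\Omega$ with $f_j \to f$ in $L^p(\omega^n)$; a standard truncation and regularization produces such a sequence. For each $f_j$, the Dirichlet problem with right--hand side $f_j\,\omega^n$ and the prescribed (smooth) boundary data admits a smooth $\omega$-plurisubharmonic solution $u_j$ on the $C^\infty$ strictly pseudoconvex domain $\Omega$. This smooth solvability in the Hermitian domain setting rests on $C^0,C^1,C^2$ a priori estimates of the Cherrier--Hanani type; the strict pseudoconvexity of $\Omega$ supplies the required $\omega$-psh subsolution, obtained for instance from a large multiple of a defining function.

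Next, I apply the stability estimate pairwise to $u_j$ and $u_k$. Since they share boundary data, $\sup_{\partial\Omega}|u_j-u_k|=0$, and the estimate yields
\[
\|u_j-u_k\|_{L^\infty(\bar\Omega)} \le C\,\|f_j-f_k\|_{L^p(\omega^n)}^{1/n}.
\]
Hence $\{u_j\}$ is Cauchy in $C(\bar\Omega)$ and converges uniformly to some $u\in C(\bar\Omega)\cap PSH(\omega)$ attaining the correct boundary values. Applying the Bedford--Taylor continuity of the Monge-Amp\`ere operator under uniform convergence of bounded plurisubharmonic functions to local potentials of $\omega+\dc u_j$ (a purely local statement that carries over from the Euclidean setting), we obtain $(\omega+\dc u_j)^n\to(\omega+\dc u)^n$ weakly. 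Combined with $f_j\omega^n\to f\omega^n$ in $L^1$, this shows that $u$ is the desired weak solution.

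Uniqueness follows from the domination principle: if $u,v$ are two continuous $\omega$-psh solutions with identical boundary data and identical Monge-Amp\`ere densities, then on the open set $\{u<v\}$ the Monge-Amp\`ere measures coincide, and the principle forces $u\ge v$ throughout; reversing the roles gives $u\equiv v$. The main obstacle I anticipate is not the passage to the limit, which is routine once the stability estimate is in hand, but rather securing the initial existence of the smooth approximating solutions $u_j$ in the Hermitian Dirichlet setting; once such a base family is available, the stability estimate propagates solvability to all densities in $L^p$ with $p>1$.
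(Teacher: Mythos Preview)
Your approach matches the paper's: approximate $f$ by smooth positive $f_j$, solve the smooth Dirichlet problems, use the stability estimate (Theorem~\ref{stability-local}) to show the solutions form a Cauchy sequence in $C(\bar\Omega)$, pass to the limit via Proposition~\ref{convergence}, and invoke the domination principle for uniqueness. One small point: the boundary datum $\phi$ in \eqref{dirichlet-problem} is only assumed continuous, so the paper also approximates it by smooth $\phi_j$ (and cites Guan--Li \cite{GL} for the smooth solvability); the boundary term $\sup_{\partial\Omega}|u_j-u_k|$ in the stability estimate then absorbs $\sup_{\partial\Omega}|\phi_j-\phi_k|\to 0$, which you should not drop.
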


The note is organized as follows. We recall some basic properties of $\omega$-
plurisubharmonic functions on complex Hermitian manifolds in {\em Section~\ref{preliminary}}.
{\em Section~\ref{global-cp}} is devoted
to prove the modified comparison principle. Then the domination principle in the local case is inferred in {\em Section~\ref{dom-prin}}. 
The  stability estimates and the Dirichlet problem for complex Monge-Amp\`ere in 
a bounded domain in $\co^n$ are studied in {\em Section~\ref{local-dp}}. In {\em Section~\ref{global-dp}} we show  $L^\infty$
{\em a priori} estimates and the existence of continuous solutions to complex
Monge-Amp\`ere equations on a compact Hermitian manifold. 

\medskip

\bf Dedication  \rm 
It is a great honour for the authors to dedicate this paper to Duong H. Phong in appreciation of his wisdom which reaches far beyond mathematics.

\medskip

\bf Acknowledgements  \rm The first author was partially supported by NCN grant 2011/01/B/ST1/00879. The second author is supported 
by the International Ph.D Program {\em" Geometry and Topology in Physical Models "}. He also would like to thank Szymon Pli\'s for helpful discussions.

\section{Basic properties of $\omega$-psh functions in the  Hermitian setting}
\label{preliminary}

Let $\Omega$ be an open set in $\co^n$
and $\omega$ a Hermitian metric in $\co^n$. 
We  collect here some basic facts about $\omega$-plurisubharmonic 
($\omega$-psh for short) functions.
We refer to \cite{DK} for more discussion. 
Recall that we use the normalisation 
$d = \partial + \bar\partial, d^c= \frac{i}{2\pi}(\bar\partial - \partial)$, $dd^c = \frac{i}{\pi} \partial\bar\partial$.
\begin{definition}
\label{psh}
Let $u : \Omega \rightarrow [-\infty, +\infty [$ be a upper semi-continuous. 
Then $u$ is called $\omega$-psh if $u\in L^1_{loc} (\Omega, \omega^n)$ and 
$dd^c u + \omega \geq 0$ as a current.
\end{definition}
Denote by $PSH ( \Omega, \omega)$ the set of 
$\omega$-psh functions in $\Omega$
(when $\Omega$ is clear from the context, we write $PSH(\omega)$ ).
We often  use the short-hand notation $\omega_u := (\omega+ dd^c u)$. 
Following Bedford-Taylor \cite{BT2}, one  defines the wedge product 
\[
	\omega_{v_1}\wedge ...\wedge \omega_{v_k}
\]
for $v_1, ..., v_k \in PSH(\omega) \cap L^\infty(\Omega)$, $1\leq k \leq n$; proceeding  by induction over $k$. 
For $k=1$ the definition is given by classical distribution theory. Suppose that
for $1\leq k \leq n-1$ the current
\[
	T = \omega_{v_1} \wedge ...\wedge \omega_{v_k} 
\]
is well defined. Fix a small ball $\ba$ in $\Omega$ 
and  a strictly psh function $\rho$  such that $dd^c \rho \geq 2 \omega$ in $\ba$.
Put $\gamma = dd^c \rho - \omega$ and $u_l = \rho + v_l \in PSH(\ba) \cap L^\infty(\ba)$, 
then $T$ can be written in $\ba$ as a linear 
combination of positive currents 
\begin{equation}
\label{combination}
	dd^c u_{j_1} \wedge ...\wedge dd^c u_{j_l} \wedge \gamma^{k-l},
	\quad 1 \leq j_1 < ...< j_l \leq k, \; 1\leq l \leq k.
\end{equation}
We know that there are sequences of smooth $\omega$-psh function $\{v_l^j\}_{j=1}^\infty$ which 
decrease to $v_l$, $1\leq l \leq k$
(by  Demailly's regularization theorem for quasi-psh functions). 
Since $T$ is a linear combination of positive currents of the form \eqref{combination},  
we obtain by the results from \cite{BT2}
\[
	T = \lim_{j \rightarrow \infty} T_j =
	\lim_{j\rightarrow \infty} \omega_{v_1^j} \wedge ...\wedge \omega_{v_k^j}
	\quad \mbox{ weakly. }
\]
Thus, $T$ is a positive current of bidgree $(k, k)$.
Moreover, 
\[
	dT = \sum_{l=1}^k d \omega \wedge 
		\omega_{v_1} \wedge ... \widehat{\omega}_{v_l}... \wedge \omega_{v_k};
\]
\[	d^c T = \sum_{l=1}^k d^c \omega \wedge 
		\omega_{v_1} \wedge ... \widehat{\omega}_{v_l}... \wedge \omega_{v_k};
\]
\[	dd^c T = 2 \sum_{1\leq l< m \leq k}  d \omega \wedge d^c \omega \wedge
		\omega_{v_1} \wedge ... 
		\widehat{\omega}_{v_l}...  \widehat{\omega}_{v_m}...
				\wedge \omega_{v_k} 
				+  \sum_{l=1}^k dd^c \omega \wedge \omega_{v_1} \wedge ...
					\widehat{\omega}_{v_l} ...\wedge \omega_{v_n}.
\]
The notation $\widehat{\omega}_{v_l}$ means that this term does not appear in 
the wedge product. Now we define for 
$u\in PSH(\omega) \cap L^\infty(\Omega)$
\[
	dd^c u \wedge T 
	:= dd^c (u \wedge T) - du \wedge d^c T + d^c u \wedge d T - u dd^c T.	
\]
The right hand side is well defined by the above formulas for $dT, d^c T$ and $dd^cT$.
Let $\{u^j\}_{j=1}^\infty$ 
be a sequence of smooth $\omega$-psh functions decreasing to $u$. We have
\[
	dd^c u\wedge T = \lim_{j \rightarrow \infty} dd^c u^j \wedge T_j
	\quad \mbox{ weakly}.
\]
Note here that for any test form $\varphi$ of bidgree $(n-k-1, n-k-1)$ 
\[
	d u \wedge d^c T \wedge \varphi= - d^c u \wedge dT \wedge \varphi.
\]
Thus,
\[
	\omega_u \wedge T = \omega \wedge T + dd^c u \wedge T
	:= \omega \wedge T + dd^c( u T) - 2 du \wedge d^c T - u dd^c T
\]
is a positive current of bidgree $(k+1, k+1)$. In the special case when 
$v_1 = ...= v_n =v \in PSH(\omega) \cap L^{\infty}(\Omega)$ we get the definition of
Monge-Amp\`ere operator 
\[
	\omega_v^n := \omega_v \wedge ...\wedge \omega_v,
\]
($n$ factors on the right hand side) which is a Radon measure.
Finally, we state for the later reference a convergence result which follows also from the corresponding statement
in \cite{BT2} applied to currents of the form \eqref{combination}.
\begin{prop}
\label{convergence}
Let $v_1, ..., v_k \in PSH(\omega) \cap L^\infty(\Omega)$, $1 \leq k \leq n$. 
Suppose that the sequences of bounded 
$\omega$-psh functions $\{v_1^j\}_{j=1}^\infty$, 
...,$\{v_k^j\}_{j=1}^\infty$ decrease (or uniformly converge) to $v_1, ..., v_k$ respectively.
Then
\[
	\lim_{j\rightarrow \infty} \omega_{v_1^j} \wedge ... \wedge \omega_{v_k^j}
	= \omega_{v_1} \wedge ...\wedge \omega_{v_k} \quad \mbox{weakly}.
\]
In particular, if $\{u_j\}_{j=1}^\infty \in PSH(\omega) \cap L^\infty(\Omega)$ decreases
(or uniformly converges) to $u \in PSH(\omega) \cap L^\infty(\Omega)$, then
\[
	\lim_{j \rightarrow \infty} \omega_{u_j}^n = \omega_u^n 
	\quad \mbox{weakly}.
\]
\end{prop}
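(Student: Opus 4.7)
The plan is to localize and reduce to the classical Bedford--Taylor convergence theorem, using the same decomposition that underlies the definition of $\omega_{v_1}\wedge\cdots\wedge\omega_{v_k}$ recalled in the excerpt. The statement is local, so fix an arbitrary point of $\Omega$, a small ball $\ba$ around it, and a strictly plurisubharmonic $\rho$ on a neighbourhood of $\ba$ with $dd^c\rho\geq 2\omega$. Set $\gamma=dd^c\rho-\omega$, $u_l=\rho+v_l$, and $u_l^j=\rho+v_l^j$. Then $\gamma$ is a smooth positive $(1,1)$-form on $\ba$, and all $u_l, u_l^j$ are bounded ordinary plurisubharmonic functions on $\ba$; by construction $u_l^j\searrow u_l$ (respectively $u_l^j\to u_l$ uniformly).

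Using $\omega_{v_l}=dd^c u_l - \gamma$ and multilinearity of the exterior product, I would expand
\[
\omega_{v_1}\wedge\cdots\wedge\omega_{v_k} = \sum_{S\subset\{1,\dots,k\}} (-1)^{k-|S|}\Bigl(\bigwedge_{l\in S} dd^c u_l\Bigr)\wedge \gamma^{k-|S|},
\]
with the analogous expansion for $\omega_{v_1^j}\wedge\cdots\wedge\omega_{v_k^j}$ in terms of the $u_l^j$. Each summand on the right-hand side is an honest Bedford--Taylor wedge product of bounded plurisubharmonic functions, capped with the fixed smooth positive form $\gamma^{k-|S|}$. Consequently the classical Bedford--Taylor convergence theorem applies termwise: for every fixed $S$, the current $\bigl(\bigwedge_{l\in S} dd^c u_l^j\bigr)\wedge \gamma^{k-|S|}$ tends weakly to $\bigl(\bigwedge_{l\in S} dd^c u_l\bigr)\wedge \gamma^{k-|S|}$ on $\ba$ as $j\to\infty$, in both the decreasing and the uniformly convergent case. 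Summing these finitely many contributions gives weak convergence on $\ba$; since $\Omega$ is covered by such balls, weak convergence holds globally. The final assertion, with $v_1=\cdots=v_n=v$ and $v_l^j=u^j$, is then a direct specialization.

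The only point that requires a moment's thought is that the decomposition must be carried out with one and the same $\rho$ (hence the same $\gamma$) for every index $j$; this is automatic because $\rho$ depends only on the ball $\ba$ and not on the wedged functions. There is no serious obstacle beyond this bookkeeping: once the multilinear expansion is set up, the whole argument is a termwise invocation of the known Euclidean result of \cite{BT2}.
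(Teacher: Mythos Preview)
Your proposal is correct and matches the paper's approach exactly: the paper does not write out a separate proof but simply remarks that the proposition ``follows also from the corresponding statement in \cite{BT2} applied to currents of the form \eqref{combination}'', which is precisely your localize--decompose--apply-Bedford--Taylor-termwise scheme. Your write-up merely makes explicit the multilinear expansion and the bookkeeping the paper leaves implicit.
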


Let now  $(X, \omega)$ be a compact Hermitian manifold, with $dim_{\co} X =n$.
The above (local)  construction applies in this  setting. 

\begin{definition}
\label{pshcpt}
Let $u : X \rightarrow [-\infty, +\infty [$ be an upper semi-continuous function. 
Then, $u$ is called  $\omega$-psh  if $u\in L^1 (X,\omega^n)$ and 
$dd^c u + \omega \geq 0$ as a current.
\end{definition}
Denote by $PSH ( \omega)$ the set of 
$\omega$-psh functions on $X$.
By the definition
$u \in PSH(\omega)$ if and only if $u \in PSH(\Omega, \omega)$ for any 
coordinate chart $\Omega \subset\subset X$.
Using partition of unity, we define the Monge-Amp\`ere operators 
$\omega_u^n$ for $u \in PSH(\omega) \cap L^\infty(X)$. It is also clear that 
Proposition~\ref{convergence} holds in this setting.

\section{The modified comparison principle}
\label{global-cp}

Let $(X,\omega)$ be a compact Hermitian manifold, $dim_{\co} X =n$. 
It is known  (see \cite{DK}) that
the comparison principle is not true on  a general compact Hermitian manifold. 
We shall use two lemmata to prove the main theorem of this section (Theorem~\ref{modified-comparison-principle}). 
From the proof  of Proposition 3.1 in \cite{BT1} and the approximation result in
\cite{BK} we have the following statement.

\begin{lemma}
\label{weak-cp1}
For $T:= (\omega+ dd^c v_1) \wedge ... \wedge (\omega + dd^c v_{n-1})$,  where 
$ v_1, ..., v_{n-1} \in PSH(\omega) \cap L^\infty(X)$  and for  $\varphi, \psi \in PSH(\omega)\cap L^\infty(X)$ we have
\[
	\int_{\{ \varphi < \psi \}} dd^c \psi \wedge T
	\leq \int_{\{ \varphi < \psi \}} dd^c \varphi \wedge T
	+ \int_{\{ \varphi < \psi \}} (\psi - \varphi) \, dd^c T.
\]
\end{lemma}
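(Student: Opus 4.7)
The plan is to follow Bedford--Taylor's original proof of the local comparison principle in $\mathbb C^n$ (Proposition 3.1 of \cite{BT1}), adapted to the compact Hermitian setting. Two things change: Stokes' theorem on the closed manifold $X$ introduces a $dd^cT$ correction in place of a boundary integral, and non-smooth $\omega$-psh data must be reduced to the smooth case via Demailly-type regularization as adapted in \cite{BK}. So I first prove the statement when $\varphi,\psi,v_1,\dots,v_{n-1}$ are smooth, and then invoke the approximation result from \cite{BK}, together with Proposition~\ref{convergence}, to obtain the general case.

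Assume the data are smooth and fix $\varepsilon>0$. Set $w_\varepsilon:=\max(\varphi,\psi-\varepsilon)\in PSH(\omega)\cap L^\infty(X)$, and partition $X$ into the open sets $A_\varepsilon:=\{\varphi>\psi-\varepsilon\}$, $B_\varepsilon:=\{\varphi<\psi-\varepsilon\}$ and the closed set $C_\varepsilon:=\{\varphi=\psi-\varepsilon\}$. Then $w_\varepsilon=\varphi$ on $A_\varepsilon$ and $w_\varepsilon=\psi-\varepsilon$ on $B_\varepsilon$, whence
\[
\int_X dd^cw_\varepsilon\wedge T=\int_{A_\varepsilon}dd^c\varphi\wedge T+\int_{B_\varepsilon}dd^c\psi\wedge T+(dd^cw_\varepsilon\wedge T)(C_\varepsilon),
\]
and the final summand is non-negative because $dd^cw_\varepsilon\wedge T$ is a positive measure. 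Stokes' theorem on $X$ gives
\[
\int_X dd^c(w_\varepsilon-\varphi)\wedge T=\int_X(w_\varepsilon-\varphi)\,dd^cT=\int_{B_\varepsilon}(\psi-\varphi-\varepsilon)\,dd^cT,
\]
since $w_\varepsilon-\varphi$ is supported in $\overline{B_\varepsilon}$ and vanishes on $C_\varepsilon$. Subtracting the analogous decomposition of $\int_X dd^c\varphi\wedge T$ and dropping the non-negative term $(dd^cw_\varepsilon\wedge T)(C_\varepsilon)$ produces
\[
\int_{B_\varepsilon}dd^c\psi\wedge T\leq\int_{B_\varepsilon}dd^c\varphi\wedge T+\int_{B_\varepsilon}(\psi-\varphi-\varepsilon)\,dd^cT+(dd^c\varphi\wedge T)(C_\varepsilon).
\]

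To conclude in the smooth case, I let $\varepsilon\to 0$ along a sequence avoiding the at most countably many values for which the pairwise disjoint level sets $C_\varepsilon$ carry positive $dd^c\varphi\wedge T$-mass. Since $B_\varepsilon$ increases to $\{\varphi<\psi\}$ and $\psi-\varphi-\varepsilon\to\psi-\varphi$ uniformly, monotone convergence for the positive measures $dd^c\psi\wedge T$ and $dd^c\varphi\wedge T$, together with dominated convergence against the signed measure $dd^cT$, yield the stated inequality. The main obstacle is the final reduction from smooth to general bounded $\omega$-psh data: the open set $\{\varphi<\psi\}$ is only lower semicontinuous under decreasing limits, the currents $dd^c\psi\wedge T$ converge only weakly, and $dd^cT$ is signed, so there is no single generic weak-limit argument; this is precisely where the explicit formula for $dd^cT$ from Section~\ref{preliminary} and Proposition~\ref{convergence} have to be used term by term.
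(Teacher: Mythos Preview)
Your approach is exactly what the paper indicates: it merely cites Proposition~3.1 of \cite{BT1} together with the approximation result of \cite{BK} and gives no further detail, and your proposal is a faithful fleshing-out of that route. One small correction: $dd^c w_\varepsilon\wedge T$ is \emph{not} a positive measure in general---only $\omega_{w_\varepsilon}\wedge T=(\omega+dd^c w_\varepsilon)\wedge T$ is---so your justification for $(dd^c w_\varepsilon\wedge T)(C_\varepsilon)\geq 0$ is wrong as stated; but in the smooth case the measure $\omega\wedge T$ is absolutely continuous and hence, for the same generic $\varepsilon$ you already select, gives zero mass to $C_\varepsilon$, whence $(dd^c w_\varepsilon\wedge T)(C_\varepsilon)=(\omega_{w_\varepsilon}\wedge T)(C_\varepsilon)\geq 0$ and the argument goes through unchanged.
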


 A weaker version of the comparison principle was shown in  \cite{DK}.
\begin{lemma}
\label{weak-cp2}
Let $\varphi, \psi \in PSH(\omega) \cap L^\infty(X)$. 
Then there is a constant
$C_n = C(n)$ such that, for $B \sup_{\{\varphi < \psi \}} (\psi - \varphi ) \leq 1$,
\[
	\int_{\{ \varphi < \psi \}} (\omega + dd^c \psi)^n 
	\leq \int_{\{ \varphi < \psi \}} (\omega + dd^c \varphi)^n
	+ C_nB \sup_{\{\varphi < \psi \}} ( \psi - \varphi ) \sum_{k =0}^{n-1} 
		\int_{ \{\varphi < \psi \} } \omega_\varphi^k \wedge \omega^{n-k}.
\]
\end{lemma}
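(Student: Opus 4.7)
The idea is to iterate Lemma~\ref{weak-cp1} to convert $\omega_\psi$-factors into $\omega_\varphi$-factors, bookkeeping the error currents that arise because $\omega$ is not closed. First, for $k = 0, 1, \ldots, n-1$, apply Lemma~\ref{weak-cp1} with $T_k := \omega_\psi^{n-1-k} \wedge \omega_\varphi^k$. Rewriting $dd^c \psi \wedge T_k = \omega_\psi \wedge T_k - \omega \wedge T_k$ (and similarly for $\varphi$), the lemma gives
\[
\int_{\{\varphi<\psi\}} \omega_\psi \wedge T_k \;\leq\; \int_{\{\varphi<\psi\}} \omega_\varphi \wedge T_k + \int_{\{\varphi<\psi\}} (\psi-\varphi)\, dd^c T_k,
\]
and summing over $k$ telescopes this into
\[
\int_{\{\varphi<\psi\}} \omega_\psi^n \;\leq\; \int_{\{\varphi<\psi\}} \omega_\varphi^n + \sum_{k=0}^{n-1} \int_{\{\varphi<\psi\}} (\psi-\varphi)\, dd^c T_k.
\]

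Next, I would bound each residual $\int (\psi-\varphi)\, dd^c T_k$. Expanding $dd^c T_k$ via the formula from Section~\ref{preliminary}, it is a sum of terms $dd^c\omega \wedge S$ (with $S$ a product of $n-2$ factors from $\{\omega_\psi, \omega_\varphi\}$) and $d\omega \wedge d^c\omega \wedge S'$ (with $S'$ a product of $n-3$ such factors). The curvature assumption \eqref{curvature} bounds these in absolute value by $\tfrac{B}{2n}\, \omega^2 \wedge S$ and $\tfrac{B}{4n^2}\, \omega^3 \wedge S'$, respectively. Combined with $\psi - \varphi \leq M := \sup_{\{\varphi<\psi\}}(\psi-\varphi)$ on the relevant set, each error is dominated by $C_n B M$ times a mixed integral of the form $\int_{\{\varphi<\psi\}} \omega_\psi^a \wedge \omega_\varphi^b \wedge \omega^c$ with $a+b+c = n$ and $c \geq 2$.

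Finally, to absorb the remaining $\omega_\psi$-factors in those mixed integrals, I would prove by induction on $a$ that for all $a + b + c = n$,
\[
\int_{\{\varphi<\psi\}} \omega_\psi^a \wedge \omega_\varphi^b \wedge \omega^c \;\leq\; \int_{\{\varphi<\psi\}} \omega_\varphi^{a+b} \wedge \omega^c + C_n B M \sum_{l=0}^{n-1} \int_{\{\varphi<\psi\}} \omega_\varphi^l \wedge \omega^{n-l}.
\]
The base case $a = 0$ is trivial. For the step, peel off one $\omega_\psi$ via Lemma~\ref{weak-cp1} applied to $T = \omega_\psi^{a-1} \wedge \omega_\varphi^b \wedge \omega^c$ (permitted because $\omega = \omega + dd^c 0$): the main term then falls under the inductive hypothesis, while the new $dd^c T$-error, bounded as in the previous paragraph, reduces to integrals with strictly fewer $\omega_\psi$-factors and again falls under the inductive hypothesis. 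The assumption $B M \leq 1$ ensures that the depth-$n$ cascade of such reductions yields a constant $C_n$ depending only on $n$.

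The main obstacle is this third step: one must organize the induction carefully so that every term produced by the curvature estimates has strictly smaller $\omega_\psi$-content than the one being bounded, terminates in expressions of the form $\omega_\varphi^l \wedge \omega^{n-l}$, and so that the accumulated constants, powered by factors of $B M \leq 1$, remain controlled by a dimensional constant.
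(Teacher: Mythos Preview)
Your proposal is correct and follows the natural line: telescope via Lemma~\ref{weak-cp1}, then absorb the $\omega_\psi$-factors in the curvature error terms by a second induction, using $BM\leq 1$ to keep the accumulated constant dimensional. The paper does not actually prove Lemma~\ref{weak-cp2} here; it is quoted from \cite{DK}, and the argument there is essentially the one you outline, so there is nothing substantively different to compare.

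Two small points worth tightening in your write-up. First, in the inductive step with $T=\omega_\psi^{a-1}\wedge\omega_\varphi^b\wedge\omega^c$, note that the terms of $dd^cT$ obtained by deleting an $\omega_\varphi$- or $\omega$-factor still carry $a-1$ copies of $\omega_\psi$; they are covered by the inductive hypothesis because your induction is on ``$\leq a-1$'' rather than ``$<a-1$'', so state it that way. Second, when you feed the step-two errors (which all have $c\geq 2$) into the step-three inequality, the ``main'' term $\int\omega_\varphi^{a+b}\wedge\omega^c$ has $a+b\leq n-2$ and is therefore already one of the summands $\int\omega_\varphi^l\wedge\omega^{n-l}$ for $0\leq l\leq n-1$; this is why the final bound matches the statement without an extra $\omega_\varphi^n$ term on the right.
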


We are ready to prove the modified comparison principle.

\begin{theorem}
\label{modified-comparison-principle}
Let $\varphi, \psi \in PSH(\omega)\cap L^\infty(X)$. Fix $0 < \varepsilon < 1$ and set
$m(\varepsilon) = \inf_X [ \varphi - (1-\varepsilon) \psi]$. 
Then for any $0 < s < \frac{\varepsilon^3}{16 B}$,
\[
	\int_{ \{ \varphi < (1-\varepsilon) \psi + m(\varepsilon) + s\}}
		\omega_{(1-\varepsilon) \psi}^n
	\leq (1 +  \frac{s B}{\varepsilon^n} \, C)
	 \int_{ \{ \varphi < (1-\varepsilon) \psi + m(\varepsilon) + s\}}
		\omega_{\varphi}^n,
\]
where $C$ is a uniform constant depending only on $n$.
\end{theorem}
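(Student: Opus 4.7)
The plan is to apply Lemma~\ref{weak-cp2} to $\varphi$ and a suitable rescaling of $\psi$, then leverage the extra positivity $\omega_{(1-\varepsilon)\psi}\geq \varepsilon\omega$ together with Lemma~\ref{weak-cp1} to absorb the error term. Set $\tilde\psi := (1-\varepsilon)\psi + m(\varepsilon) + s$; a direct computation gives $\omega_{\tilde\psi} = (1-\varepsilon)\omega_\psi + \varepsilon\omega \geq \varepsilon\omega$, so $\tilde\psi\in PSH(\omega)\cap L^\infty(X)$, and the sublevel set of interest is $U := \{\varphi<\tilde\psi\}$. By the definition of $m(\varepsilon)$ one has $0\leq \tilde\psi-\varphi\leq s$ on $U$, and the hypothesis $s<\varepsilon^{3}/(16B)$ ensures $Bs<1$, so Lemma~\ref{weak-cp2} produces
\[
    \int_U \omega_{\tilde\psi}^n \leq \int_U \omega_\varphi^n + C_n B s\sum_{k=0}^{n-1}\int_U \omega_\varphi^k\wedge\omega^{n-k}.
\]
The theorem will follow once I show that the sum on the right is bounded by $C\varepsilon^{-n}\int_U\omega_\varphi^n$.

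For this, I use $\omega\leq \varepsilon^{-1}\omega_{\tilde\psi}$ pointwise to estimate $\int_U \omega_\varphi^k\wedge\omega^{n-k}\leq \varepsilon^{-(n-k)} a_k$, where $a_k := \int_U\omega_\varphi^k\wedge\omega_{\tilde\psi}^{n-k}$. Then Lemma~\ref{weak-cp1} applied with $T_k := \omega_\varphi^{k-1}\wedge\omega_{\tilde\psi}^{n-k}$, after adding $\int_U\omega\wedge T_k$ to both sides, yields the one-step swap
\[
    a_{k-1}\leq a_k + E_k,\qquad E_k := \int_U (\tilde\psi-\varphi)\,dd^c T_k,
\]
which telescopes to $a_k \leq a_n + \sum_{l>k} E_l = \int_U\omega_\varphi^n + \sum_{l>k} E_l$ for every $k$. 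So everything reduces to dominating the $E_k$.

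The formulas for $dd^c T$ recalled in Section~\ref{preliminary} write $dd^cT_k$ as a sum of terms of the shape $dd^c\omega\wedge(\text{product of }n-2\text{ Hermitian forms})$ and $d\omega\wedge d^c\omega\wedge(\text{product of }n-3)$. The curvature hypothesis~\eqref{curvature} dominates $|dd^c T_k|$ by $C_n B$ times a sum of positive currents $\omega^a\wedge\omega_\varphi^b\wedge\omega_{\tilde\psi}^c$ with $a\in\{2,3\}$ and $a+b+c=n$. Applying $\omega\leq \varepsilon^{-1}\omega_{\tilde\psi}$ once more costs a factor $\varepsilon^{-3}$, giving $|E_k|\leq sC_n B\varepsilon^{-3}\sum_b a_b$. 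Plugging this into the iteration produces $\max_k a_k \leq \int_U\omega_\varphi^n + C_n' B s\varepsilon^{-3}\max_b a_b$, and the bound $s<\varepsilon^3/(16B)$ is exactly what is needed to make $C_n'Bs\varepsilon^{-3}\leq 1/2$, so that the maximum on the right is absorbed, yielding $\max_k a_k \leq 2\int_U\omega_\varphi^n$. Feeding this back with $\sum_k\varepsilon^{-(n-k)}\leq n\varepsilon^{-n}$ into the output of Lemma~\ref{weak-cp2} gives the claimed factor $1+CBs\varepsilon^{-n}$. The main technical obstacle is the bookkeeping in this final step: one must track $\varepsilon$-powers through $dd^cT_k$ tightly, and the exponent $3$ in $\varepsilon^3/(16B)$ matches exactly the worst-case $\varepsilon^{-3}$ coming from the $d\omega\wedge d^c\omega$ contribution, leaving little slack in the constants.
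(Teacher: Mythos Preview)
Your overall strategy matches the paper's: apply Lemma~\ref{weak-cp2}, then control the mixed terms $\int_U\omega_\varphi^k\wedge\omega^{n-k}$ using Lemma~\ref{weak-cp1} together with the extra positivity $\omega_{\tilde\psi}\geq\varepsilon\omega$. Your choice to work with $a_k=\int_U\omega_\varphi^k\wedge\omega_{\tilde\psi}^{\,n-k}$ rather than the paper's $a_k=\int_U\omega_\varphi^k\wedge\omega^{n-k}$ is a legitimate variant: you spend all of the factor $\varepsilon^{-n}$ at once via $\omega^{n-k}\leq\varepsilon^{-(n-k)}\omega_{\tilde\psi}^{\,n-k}$, whereas the paper extracts one power of $\varepsilon^{-1}$ at each step of its recursion.

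The gap is in the absorption step. After telescoping you have $a_k\leq a_n+\sum_{l>k}E_l$, and each $E_l$ is indeed bounded by $sB\varepsilon^{-3}$ times a fixed number of $a_j$'s. But there are up to $n$ values of $l$ in that sum, so the constant $C_n'$ in your inequality $\max_k a_k\leq a_n+C_n'Bs\varepsilon^{-3}\max_j a_j$ is of order $n$. The theorem \emph{fixes} the range $s<\varepsilon^3/(16B)$, so you only know $Bs\varepsilon^{-3}<1/16$; hence $C_n'Bs\varepsilon^{-3}<C_n'/16$ is not $\leq 1/2$ once $n$ is moderately large, the right-hand maximum cannot be absorbed, and the argument breaks down. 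The claim that ``$s<\varepsilon^3/(16B)$ is exactly what is needed to make $C_n'Bs\varepsilon^{-3}\leq 1/2$'' is therefore incorrect.

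The paper avoids this by a genuine step-by-step induction rather than a single passage to the maximum. With its $a_k$ it obtains $\varepsilon a_k\leq a_{k+1}+sB(a_k+a_{k-1}+a_{k-2})$ and then builds constants $d_j$ with $a_k\leq d_{k+1}\cdots d_n\,a_n$, proving inductively that $\varepsilon^{-1}<d_j<2\varepsilon^{-1}$; the choice $sB<\varepsilon^3/16$ is precisely what keeps each $d_j$ in this interval. Your own recursion $a_{k-1}\leq a_k+sB\varepsilon^{-3}(a_{k-1}+a_{k-2}+a_{k-3})$ can be closed by the same kind of induction (one obtains, for instance, $a_k\leq 2^{\,n-k}a_n$), but the shortcut of taking a global maximum does not suffice.
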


\begin{proof}
We wish to apply Lemma~\ref{weak-cp2} with 
 $ (1-\varepsilon) \psi + m(\varepsilon) + s $ in place of $\psi$.
 Note   that on $U(\varepsilon, s ) = \{ \varphi < (1-\varepsilon) \psi + m(\varepsilon) + s\}$,
\[
	\sup_{U(\varepsilon, s )} \left[ (1-\varepsilon) \psi + m(\varepsilon)  - \varphi +s
							\right]
					\leq s.
\]
Therefore, in view of Lemma~\ref{weak-cp2},  it is enough to estimate
\[
	 \sum_{k =0}^{n-1} 
		\int_{ U(\varepsilon, s) } \omega_\varphi^k \wedge \omega^{n-k}.
\]
For $k = 0, ..., n$, set 
\[ 
	a_k = \int_{U(\varepsilon,s)} \omega_{\varphi}^k\wedge \omega^{n-k}.
\]
Let $\delta := \frac{\varepsilon^3}{16 B}$. 
We  shall verify that for $0< s < \delta$ 
\begin{equation}
\label{mixma1}
	\varepsilon  \, a_0 \leq a _1 +  \delta \, B\,  a_0, \quad \mbox{ and } \quad
	\varepsilon \, a_1 \leq a_2 + \delta\, B \, (a_1 + a_0),
\end{equation}
and for $2 \leq k \leq n-1$,
\begin{equation}
\label{mixma2}
	\varepsilon \, a_k \leq a_{k+1} + \delta \,B \, ( a_k + a_{k-1} + a_{k-2}).
\end{equation}
Let us assume for a moment that \eqref{mixma1} and \eqref{mixma2} are true.
 It follows from the first inequality of \eqref{mixma1} that
\begin{equation}
\label{mixma3}
	a_0 \leq d_1  \, a_1 \; \mbox { with } d_1= \frac{1}{ \varepsilon -  \delta \, B  }.
\end{equation}
From the second inequality of \eqref{mixma1} and \eqref{mixma3} we have
\[
	a_0 \leq d_1 \, d_2 \, a_2 \quad \mbox{ and } \quad a_1 \leq d_2 \, a_2,
\]
with $1/{d_2} := \varepsilon - \delta \, B\, (1 + d_1)$. Using \eqref{mixma2} and the induction we get that, for $k =0 ,..., n-1$,
\begin{equation}
\label{mixma4}
	a_k \leq d_{k+1} \, ... \, d_{n} \, a_n
\end{equation}
where  $d_0 :=0$, $1/d_1= \varepsilon - \delta \,B$, and for $j \geq 1$, 
\[
	1/d_{j+1} = \varepsilon - \delta\, B \, (1 + d_j + \, d_{j-1}\, d_j).
\]
Furthermore, since $\delta \, B= \frac{\varepsilon^3}{16}$,
by an elementary calculation, one gets that
\begin{equation}
\label{p}
	\varepsilon^{-1} < d_j <2\varepsilon^{-1} \quad \forall j \geq 1.
\end{equation}
In particular $d_j$ are positive and finite.
It concludes for any $0 \leq k \leq n-1$  and for $0< s < \delta$,
\[
	a_k \leq d_{k+1}... d_n \,  a_n \leq \frac{C}{\varepsilon^n} \, a_n.
\]
It remains to verify \eqref{mixma2} (as
\eqref{mixma1} is its consequence with the convention that $a_k = 0$ for $k< 0$).
Indeed, since 
\[ 
	\varepsilon \, \omega \leq
	\omega + dd^c [(1-\varepsilon) \psi + m(\varepsilon) +s ] \quad \mbox{ and } \quad
	U(\varepsilon, s ) = \{ \varphi < (1-\varepsilon) \psi + m(\varepsilon) + s\},
\]
it follows from Lemma~\ref{weak-cp1} that
\[
	\varepsilon \int_{U(\varepsilon, s)} \omega_\varphi^k \wedge \omega^{n-k}
\leq	\int_{U(\varepsilon, s)} \omega_{(1-\varepsilon) \psi} 
					\wedge \omega_\varphi^k \wedge \omega^{n-k-1}
\leq	\int_{U(\varepsilon, s )} \omega_\varphi^{k+1} \wedge \omega^{n-k-1}
	+ R, 
\]
where 
\[
	R	= \int_{U(\varepsilon, s)} 
			[(1-\varepsilon) \psi + m(\varepsilon) + s - \varphi] 
				dd^c \left( \omega_\varphi^k \wedge \omega^{n-k-1}
					\right)
		\leq s\, B\, ( a_k + a_{k-1} + a_{k-2}).
\]
Thus, for $0 < s < \delta = \frac{\varepsilon^3}{16 B}$,
\[
	\varepsilon a_k \leq a_{k+1} + \delta \, B(a_k + a_{k-1} + a_{k-2}).
\]
The theorem follows.
\end{proof}

\section{The domination principle}
\label{dom-prin}

Let $\Omega$ be a bounded open set in $\co^n$. The constant $B>0$ is defined as in \eqref{curvature} for $\bar \Omega$. The next theorem is
an analogue of the modified comparison principle for a bounded open set
in $\co^n$.

\begin{theorem}
\label{local-modified-comparison-principle}
Fix $0 < \varepsilon < 1$. Let $\varphi, \psi \in PSH(\omega)\cap L^\infty(\Omega)$ 
be such that $\liminf_{\zeta \rightarrow z\in \partial \Omega} (\varphi - \psi)(\zeta) \geq 0$.  
Suppose that $M = \sup_\Omega (\psi - \varphi)>0$, and  
$\omega + dd^c \psi \geq \varepsilon \omega$ in $\Omega$. Then, for any 
$0 < s < \varepsilon_0:= \min\{ \frac{\varepsilon^n}{16 B}, M \}$, 
\[
	\int_{ \{ \varphi < \psi - M  + s\}}
		\omega_{\psi}^n
	\leq \left(1 +  \frac{sB}{\varepsilon^n} \, C_n \right)
	 \int_{ \{ \varphi < \psi -M + s\}}
		\omega_{\varphi}^n,
\]
where $C_n$ is a uniform constant depending only on $n$.
\end{theorem}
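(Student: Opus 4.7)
The plan is to mimic the proof of the global modified comparison principle (Theorem~\ref{modified-comparison-principle}) in this local setting, with two differences: the auxiliary rescaling by $(1-\varepsilon)$ is unnecessary because $\omega_\psi\geq\varepsilon\omega$ is assumed directly, and the compactness of $X$ has to be replaced by the boundary hypothesis $\liminf_{\zeta\to\partial\Omega}(\varphi-\psi)\geq 0$.

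First I would observe that this boundary condition forces the sublevel set $U(s):=\{\varphi<\psi-M+s\}$ to satisfy $U(s)\subset\subset\Omega$ for every $0<s<M$. Indeed, for any $\eta>0$ one has $\varphi-\psi\geq -\eta$ near $\partial\Omega$, so $\varphi-(\psi-M+s)\geq M-s-\eta>0$ near $\partial\Omega$ once $\eta<M-s$. This relative compactness is what substitutes for working on a closed manifold: all integrations by parts behind Lemmas~\ref{weak-cp1} and~\ref{weak-cp2} can then be carried out with compactly supported cutoffs, and no boundary term appears. Consequently the local analogues of those two lemmata hold verbatim whenever the set where $\varphi<\tilde\psi$ is relatively compact in $\Omega$.

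Next I would apply the local version of Lemma~\ref{weak-cp2} to $\varphi$ and $\tilde\psi:=\psi-M+s$. Since $\sup_{U(s)}(\tilde\psi-\varphi)\leq s$ and $Bs<1$ (ensured by $s<\varepsilon^n/(16B)$), this yields
\[
\int_{U(s)}\omega_\psi^n=\int_{U(s)}\omega_{\tilde\psi}^n\leq\int_{U(s)}\omega_\varphi^n+C_n\, Bs\sum_{k=0}^{n-1}a_k,\qquad a_k:=\int_{U(s)}\omega_\varphi^k\wedge\omega^{n-k}.
\]
To control each $a_k$ by $a_n$ I would use the assumption $\omega_{\tilde\psi}\geq\varepsilon\omega$ on $U(s)$ together with the local version of Lemma~\ref{weak-cp1} applied to $T=\omega_\varphi^k\wedge\omega^{n-k-1}$:
\[
\varepsilon\, a_k\leq\int_{U(s)}\omega_{\tilde\psi}\wedge T\leq\int_{U(s)}\omega_\varphi\wedge T+\int_{U(s)}(\tilde\psi-\varphi)\,dd^cT.
\]
The remainder integral is bounded by $C_n s B(a_k+a_{k-1}+a_{k-2})$ using the explicit formula for $dd^cT$ recalled in Section~\ref{preliminary} combined with \eqref{curvature}. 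This reproduces exactly the recurrence $\varepsilon a_k\leq a_{k+1}+C_n\delta B(a_k+a_{k-1}+a_{k-2})$ driving the global proof (with $\delta=s$), so solving it as there yields $a_k\leq(C_n/\varepsilon^n)\,a_n$ for every $0\leq k\leq n-1$, and substitution into the Lemma~\ref{weak-cp2} estimate finishes the argument.

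The step I anticipate as most delicate is the justification of the local versions of Lemmas~\ref{weak-cp1} and~\ref{weak-cp2}, which are only stated on the compact manifold $X$. The boundary hypothesis resolves the lack of compactness by forcing $U(s)\subset\subset\Omega$, but one still has to approximate the characteristic function of the open set $U(s)$ by smooth cutoffs in a way compatible with Bedford-Taylor's wedge products. The standard maneuver of replacing $\{\varphi<\tilde\psi\}$ by $\{\max(\varphi,\tilde\psi-\eta)<\tilde\psi\}$ and letting $\eta\downarrow 0$ does the job, because compact containment guarantees that all boundary contributions vanish in the limit. Once this is verified the recurrence analysis transfers unchanged, and the exponent $\varepsilon^n$ in $\varepsilon_0$ emerges from the $n$-fold iteration $a_0\leq d_1\cdots d_n\, a_n$ with each $d_j\asymp 1/\varepsilon$.
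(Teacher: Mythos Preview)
Your proposal is correct and follows essentially the same approach as the paper: state local analogues of Lemmas~\ref{weak-cp1} and~\ref{weak-cp2} (valid under the boundary hypothesis, which forces $U(s)\subset\subset\Omega$), then rerun the recurrence argument of Theorem~\ref{modified-comparison-principle} with $\psi$ in place of $(1-\varepsilon)\psi$ since $\omega_\psi\geq\varepsilon\omega$ is now assumed. You even spell out more carefully than the paper why compact containment dispenses with boundary terms in the integrations by parts; the paper simply records the two local lemmata and says the rest ``goes exactly as'' the global proof.
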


\begin{proof}
It is very similar to the proof of the modified comparison principle. The
 lemmata we need have now the following form.
\begin{lemma}
\label{local-weak1}
Let $T:= (\omega+ dd^c v_1) \wedge ... \wedge (\omega + dd^c v_{n-1})$ with 
$ v_1, ..., v_{n-1} \in PSH(\omega) \cap L^\infty( \Omega)$  be a positive current of bidegree $(n-1, n-1)$. 
Let $\varphi, \psi \in PSH(\omega) \cap L^\infty(\Omega)$. 
If $\liminf_{ \zeta \rightarrow z \in \partial\Omega} (  \varphi - \psi) (\zeta) \geq 0$, then
\[
	\int_{\{ \varphi < \psi \}} dd^c \psi \wedge T
	\leq \int_{\{ \varphi < \psi \}} dd^c \varphi \wedge T
	+ \int_{\{ \varphi < \psi \}} (\psi - \varphi) \, dd^c T.
\]
\end{lemma}

\begin{lemma}
\label{local-weak2}
Let $\varphi, \psi \in PSH( \omega)\cap L^\infty(\Omega)$ be such that 
$\liminf_{\zeta \rightarrow z\in \partial \Omega} (\varphi - \psi)(\zeta) \geq 0$. 
Suppose that $B \sup_{\{\varphi < \psi \}} ( \psi - \varphi ) \leq 1$. Then,
\begin{align*}
	\int_{\{ \varphi < \psi \}} \omega_\psi^n 
	\leq \int_{\{ \varphi < \psi \}} \omega_\varphi^n 
 	+ B \sup_{\{\varphi < \psi \}} ( \psi - \varphi ) \left(  C_n \sum_{k =0}^{n-1} 
		\int_{ \{\varphi < \psi \} } \omega_\varphi^k \wedge \omega^{n-k} \right),
\end{align*}
where the constant $C_n$ depends only on $n$.
\end{lemma}
 Having those the proof goes exactly as  the one of Theorem~\ref{modified-comparison-principle}. 
\end{proof}

As a consequence we obtain the domination principle.

\begin{corollary}
\label{domination-principle}
Let $\Omega$ be a bounded open set in $\co^n$. 
Let $u, v \in PSH(\omega) \cap L^\infty( \Omega)$ be such that $\liminf_{ \zeta 
\rightarrow z \in \partial\Omega} (  u-v) (\zeta) \geq 0$. 
Suppose that $(\omega +dd^c u )^n \leq (\omega + dd^c v)^n$. Then $v \leq u$ in $\Omega$.
\end{corollary}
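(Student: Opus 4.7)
The plan is to argue by contradiction via the local modified comparison principle (Theorem~\ref{local-modified-comparison-principle}). Assume $M:=\sup_\Omega(v-u)>0$; after adding a common constant to $u$ and $v$, which preserves both the Monge-Amp\`ere measures and the boundary hypothesis, we may assume $v\geq 0$ on $\bar\Omega$.

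Since Theorem~\ref{local-modified-comparison-principle} requires the upper function to satisfy a strict positivity $\omega_\psi\geq\varepsilon\omega$, I would perturb $v$ additively. Fix a smooth $\rho$ defined on a neighbourhood of $\bar\Omega$ with $\rho\leq 0$ and $dd^c\rho\geq\omega$ on $\bar\Omega$ (for instance $\rho(z)=A(|z|^2-R^2)$ for suitable $A,R$). For small $\varepsilon>0$ set $\psi_\varepsilon:=v+\varepsilon\rho$. Then $\psi_\varepsilon\in\psh(\omega)\cap L^\infty(\Omega)$, $\omega_{\psi_\varepsilon}\geq\varepsilon\omega$, and $\liminf_{\zeta\to\partial\Omega}(u-\psi_\varepsilon)(\zeta)\geq 0$ (using $\rho\leq 0$); for $\varepsilon$ sufficiently small also $M_\varepsilon:=\sup(\psi_\varepsilon-u)>0$. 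The binomial expansion of $\omega_{\psi_\varepsilon}^n=(\omega_v+\varepsilon\,dd^c\rho)^n$ supplies the crucial pointwise improvement
\[
\omega_{\psi_\varepsilon}^n\;\geq\;\omega_v^n+\varepsilon^n(dd^c\rho)^n\;\geq\;\omega_v^n+\varepsilon^n\omega^n.
\]

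I then apply Theorem~\ref{local-modified-comparison-principle} with $\varphi=u$ and $\psi=\psi_\varepsilon$. Setting $U_\varepsilon(s):=\{u<\psi_\varepsilon-M_\varepsilon+s\}$, for any $0<s<\min\{\varepsilon^n/(16B),\,M_\varepsilon\}$ one gets
\[
\int_{U_\varepsilon(s)}\omega_{\psi_\varepsilon}^n\;\leq\;\Bigl(1+\tfrac{sBC_n}{\varepsilon^n}\Bigr)\int_{U_\varepsilon(s)}\omega_u^n.
\]
Substituting the pointwise lower bound above on the left and the measure hypothesis $\omega_u^n\leq\omega_v^n$ on the right, the $\omega_v^n$-terms cancel, leaving
\[
\varepsilon^n\int_{U_\varepsilon(s)}\omega^n\;\leq\;\frac{sBC_n}{\varepsilon^n}\int_{U_\varepsilon(s)}\omega_v^n\;\leq\;\frac{sBC_n}{\varepsilon^n}\int_\Omega\omega_v^n.
\]

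The final step is to extract a contradiction from this estimate, and this is where the main obstacle lies. Letting $s\downarrow 0$ with $\varepsilon$ fixed forces, by monotone convergence, the sup-attainment set $\{\psi_\varepsilon-u=M_\varepsilon\}$ to have $\omega^n$-measure zero. The delicate point is that this need not be contradictory on its own; one must exploit the quantitative linear bound $\omega^n(U_\varepsilon(s))\leq c_\varepsilon s$ for every admissible $s$, together with geometric information about how the super-level sets $\{\psi_\varepsilon-u>M_\varepsilon-s\}$ of a difference of $\omega$-psh functions actually shrink with $s$ --- most cleanly after first reducing to continuous $u,v$ via Demailly's regularization, so that $M_\varepsilon$ is attained at an interior point of $\Omega$ (the boundary hypothesis combined with $\rho\leq 0$ forces $\psi_\varepsilon-u\leq 0$ near $\partial\Omega$) and one has a quantitative neighbourhood estimate for the sublevel sets that violates the displayed inequality for suitable choices of $\varepsilon$ and $s$. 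Once this contradiction is reached, we conclude $v\leq u$ in $\Omega$.
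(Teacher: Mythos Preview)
Your proof has a genuine gap in the final step, and the paper's key idea is precisely what you are missing. Your additive perturbation $\psi_\varepsilon=v+\varepsilon\rho$ yields only the \emph{additive} lower bound $\omega_{\psi_\varepsilon}^n\geq\omega_v^n+\varepsilon^n\omega^n$, and after applying Theorem~\ref{local-modified-comparison-principle} together with $\omega_u^n\leq\omega_v^n$ you arrive at
\[
\varepsilon^n\int_{U_\varepsilon(s)}\omega^n\;\leq\;\frac{sBC_n}{\varepsilon^n}\int_{U_\varepsilon(s)}\omega_v^n.
\]
This is \emph{not} a contradiction. Both sides tend to zero as $s\to0$, and the rates are compatible: even for smooth $u,v$, if $\psi_\varepsilon-u$ has a nondegenerate interior maximum then $\int_{U_\varepsilon(s)}\omega^n$ behaves like $s^n$ by the Morse lemma, so the displayed inequality reads $\varepsilon^n\,c\,s^n\leq C_\varepsilon s$, which holds for all small $s$. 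No amount of regularization or ``quantitative neighbourhood estimate'' will repair this, because your inequality is simply too weak.

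The paper's trick is to incorporate a small multiple of $u$ itself into the competitor: one sets
\[
\hat v:=(1-\tau)^{1/n}v+(2\tau)^{1/n}u+\rho
\]
with $(1-\tau)^{1/n}+(2\tau)^{1/n}\leq 1+\varepsilon$ and $dd^c\rho\geq 2\varepsilon\omega$. The point is that the mixed Monge--Amp\`ere inequality then gives a \emph{multiplicative} improvement
\[
\omega_{\hat v}^n\;\geq\;(1-\tau)\,\omega_v^n+2\tau\,\omega_u^n\;\geq\;(1+\tau)\,\omega_u^n,
\]
using $\omega_v^n\geq\omega_u^n$. Feeding this into Theorem~\ref{local-modified-comparison-principle} yields
\[
(1+\tau)\int_{U(s)}\omega_u^n\;\leq\;\int_{U(s)}\omega_{\hat v}^n\;\leq\;\Bigl(1+\tfrac{sBC_n}{\varepsilon^n}\Bigr)\int_{U(s)}\omega_u^n,
\]
and since $\int_{U(s)}\omega_{\hat v}^n\geq\varepsilon^n\int_{U(s)}\omega^n>0$ forces $\int_{U(s)}\omega_u^n>0$, one may divide to obtain $\tau\leq sBC_n/\varepsilon^n$, a contradiction for $s$ small enough. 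The inclusion of $u$ in $\hat v$ is exactly what converts the comparison into a homogeneous inequality that can be cancelled; your purely additive perturbation cannot achieve this.
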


\begin{proof}
First, we may assume that 
$\liminf_{ \zeta \rightarrow z \in \partial\Omega} (  u -v ) (\zeta) \geq 2\alpha >  0$.
Otherwise, replace $u$ by $u + 2 \alpha$ and then let $\alpha \rightarrow 0$.
Thus there is a relatively compact open set $\Omega'$ such that 
$u(z) \geq v(z)+ \alpha$ for $z\in \Omega \setminus \Omega'$. 
By subtracting the same constant, we also assume that $u, v \leq 0$.
We argue by contradiction. Suppose that $\{ u < v \}$ is non empty.
Since $\Omega$ is bounded, 
there is a strictly psh function $\rho \in C^2(\bar\Omega)$ 
such that $-C \leq \rho \leq 0$ in $\Omega$,  for some constant $0< C$. Since,
$u, v ,\rho$ are bounded in $\Omega$, then after multiplying $\rho$ by 
a small positive constant
we see that there exist $0< \varepsilon, \tau <<1/2$  such that 
\[
	dd^c \rho \geq 2\,  \varepsilon\, \omega , \quad 
	(1 - \tau)^{1/n} + (2\tau)^{1/n} \leq 1+  \varepsilon,
\]
and
\[	
	\{ u < (1-\tau)^{1/n} v + (2\tau)^{1/n} u + \rho\}\subset\subset \Omega
\]
is non empty. Put $\hat v := (1-\tau)^{1/n} v + (2\tau)^{1/n} u + \rho$. 
Since $\omega_v^n \geq \omega_u^n$, it follows that
\[
	\omega_{\hat v}^n 
				\geq \left[ (1-\tau)^{1/n} \omega_v + (2\tau)^{1/n} \omega_u
						\right]^n
				 \geq (1-\tau) \, \omega_v^n + 2 \tau \, \omega_u^n
				 \geq (1+\tau) \omega_u^n.
\]
Thus,
\begin{equation}
\label{major}
	\omega + dd^c \hat v \geq \varepsilon \omega 
		\quad	\mbox{ and } \quad	
	\omega_{\hat v}^n \geq (1+ \tau)\, \omega_u^n 
\end{equation}
in   $\Omega $.
Let us denote  by $U(s)$  the set $ \{ u < \hat v -M +s \}$ with $M = \sup_\Omega (\hat v - u) >0$. 
Then for any $0< s < M$, 
\[
	U(s) \subset\subset \Omega
	\quad \mbox{ and } \quad
	\sup_{U(s)} \{ (\hat v -M+s) -u \}=s.
\]
It follows from \eqref{major} that the assumptions of Theorem~\ref{local-modified-comparison-principle} are fulfilled for 
$\varphi:= u$, $\psi := \hat v -M +s$. 
Hence, for any $0 < s < \epsilon_0 = \min\{ \frac{\varepsilon^n}{16 B}, M \}$,
\[
	0< \int_{U(s)} ( \omega + dd^c \hat v )^n 
	\leq   \left(1 +  \frac{sB}{\varepsilon^n} \, C_n \right)
		 \int_{ U(s)}
			\omega_u^n.
\]
Then using \eqref{major}, we get for $ 0 < s < \epsilon_0$  
\begin{equation}
\label{ci1}
	0< \tau \int_{U(s)} \omega_u^n 
	\leq    \frac{ s \, B C_n}{\varepsilon^n} 
		 \int_{ U(s)}
			\omega_u^n.
\end{equation}
Therefore $0< \tau \leq  \frac{ s \, B C_n}{\varepsilon^n}$.
This is impossible when $0< s$ is small enough. Thus, the proof follows.
\end{proof}

\section{The Dirichlet problem in a bounded domain in $\co^n$}
\label{local-dp}

Denote by $\beta$ the standard K\"ahler form $ dd^c \|z\|^2$  in $\co^n$ and 
by  $\omega$  an arbitrary  Hermitian form in $\co^n$.
 Let $\Omega$ be a bounded open
set in $\co^n$. We write $L^p(\omega^n)$ for $L^p(\Omega, \omega^n)$ 
 and  consider the Dirichlet problem for the Monge-Amp\`ere equation
with  the background metric  $\omega$.
Given $0 \leq f \in L^p(\omega^n)$, $p>1$, and $\phi \in C(\partial\Omega)$, 
we seek for a solution to
\begin{equation}
\label{dirichlet-problem}
\begin{cases}
	u \in PSH(\omega)\cap C(\bar \Omega), \\
	(\omega+ dd^c u)^n = f(z)\;\omega^n & \mbox { in }
		\Omega, \\ 
	u  = \phi &\mbox{ on } \partial \Omega, 
\end{cases}
\end{equation}
where the  equality in the second line is understood in  sense of currents.

From the domination principle above  and the stability estimates  \cite{ko96} we get the following result.
\begin{theorem}
\label{stability-local} 
Let $\Omega$ be a bounded open set in $\co^n$ and
let  $u,v \in PSH( \omega) \cap C(\bar \Omega)$ be such that
\[
	 (\omega + dd^c u)^n =f \omega^n, \quad
	(\omega + dd^c v)^n = g \omega^n
\] 
with $0\leq f, g \in L^p(\omega^n)$, $p>1$.
Then 
\[
	\| u - v \|_{L^\infty(\bar \Omega)} 
			\leq \sup_{\partial \Omega} |u-v| 
				+ C \| f-g\|_{L^p(\omega^n)}^\frac{1}{n},
\]
where C depends only on $\Omega$, $\omega$ and $p$.
\end{theorem}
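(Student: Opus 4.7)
The plan is to adapt Kolodziej's stability scheme from \cite{ko96} to the Hermitian setting. The classical argument in \cite{ko96} uses the comparison principle, which fails in our generality; I would replace each such appeal by Corollary \ref{domination-principle} (the domination principle) and its quantitative form, Theorem \ref{local-modified-comparison-principle}. The remaining ingredients in \cite{ko96}—H\"older's inequality and a capacity/integral-inequality bootstrap—are purely local and transfer to the Hermitian case without modification.

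After subtracting the constant $\sup_{\partial\Omega}|u-v|$ and invoking symmetry in $u$ and $v$, it suffices to show $s_0 := \sup_\Omega (v-u)_+ \leq C\|f-g\|_{L^p}^{1/n}$ under the boundary condition $v \leq u$ on $\partial\Omega$. Assuming $s_0 > 0$, I would regularize $v$ exactly as in the proof of Corollary \ref{domination-principle}: pick small parameters $\tau, \varepsilon > 0$ with $(1-\tau)^{1/n}+(2\tau)^{1/n} \leq 1+\varepsilon$, choose a strictly psh function $\rho \in C^2(\bar\Omega)$ with $dd^c\rho \geq 2\varepsilon\omega$ and $|\rho|$ much smaller than $s_0$, and set
\[
\hat v := (1-\tau)^{1/n}v + (2\tau)^{1/n}u + \rho.
\]
Then $\omega + dd^c \hat v \geq \varepsilon\omega$, the expansion of the Monge--Amp\`ere operator gives
\[
\omega_{\hat v}^n \geq (1-\tau)\,g\,\omega^n + 2\tau\,f\,\omega^n,
\]
and the quantity $M := \sup_{\bar\Omega}(\hat v - u)$ is attained in the interior with $M \geq s_0/2$.

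Next, I would apply Theorem \ref{local-modified-comparison-principle} to $\varphi = u$ and $\psi = \hat v - M + s$ on the sublevel set $U(s) := \{u < \hat v - M + s\} \subset\subset \Omega$, valid for $0 < s < \min\{\varepsilon^n/(16B), M\}$. Substituting the above Monge--Amp\`ere lower bound for $\omega_{\hat v}^n$ and rearranging produces an inequality of the form
\[
\Bigl(\tau - \tfrac{sBC_n}{\varepsilon^n}\Bigr)\int_{U(s)} f\,\omega^n \leq (1-\tau)\int_{U(s)}(f - g)\,\omega^n.
\]
For $s$ chosen small compared to $\tau$, the left-hand coefficient is positive, and H\"older's inequality bounds the right-hand side by $(1-\tau)\|f-g\|_{L^p}\,\mu(U(s))^{1-1/p}$. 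Feeding this into the capacity-volume iteration of \cite{ko96}—which extracts a quantitative bound on $s_0$ from an inequality of precisely this shape—yields $s_0^n \leq C\|f-g\|_{L^p}$.

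The main obstacle is the delicate choice of $\tau, \varepsilon, s$, and $\kappa := \sup|\rho|$ as functions of $s_0$, so that simultaneously (i) the coefficient $\tau - sBC_n/\varepsilon^n$ remains bounded below, (ii) $M$ stays comparable to $s_0$, and (iii) Kolodziej's bootstrap closes with the optimal $1/n$ exponent. All of this is essentially classical; the only new Hermitian feature is the perturbation factor $(1+sBC_n/\varepsilon^n)$ contributed by Theorem \ref{local-modified-comparison-principle}, which tends to $1$ as $s \to 0$ and thus does not disrupt the iteration.
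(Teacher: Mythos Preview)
Your approach differs substantially from the paper's, which is much shorter. Rather than rerunning a Hermitian version of the Kolodziej iteration, the paper reduces to the classical K\"ahler estimate by means of an auxiliary function. Writing $\omega^n = h\,\beta^n$ on a ball $B(0,R)\supset\Omega$ (with $\beta = dd^c\|z\|^2$ and $h>0$ smooth), one uses \cite{ko96} as a black box to solve the ordinary Dirichlet problem $(dd^c w)^n = |f-g|\,h\,\beta^n$ in $B(0,R)$ with $w=0$ on $\partial B(0,R)$; the $L^\infty$ estimate from \cite{ko96} delivers $\|w\|_\infty \leq C\|f-g\|_{L^p}^{1/n}$ directly. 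Then $(\omega+dd^c(u+w))^n \geq \omega_u^n + (dd^c w)^n \geq g\,\omega^n = \omega_v^n$, and since $w\leq 0$ a single invocation of Corollary~\ref{domination-principle} yields $u+w \leq v+\sup_{\partial\Omega}|u-v|$. Symmetry gives the other inequality. No $\hat v$-perturbation, no capacity bootstrap; the only Hermitian input is the domination principle, used once in each direction.

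Your direct scheme is a plausible alternative, but the difficulty you flag at the end is more than bookkeeping. The constraint $(1-\tau)^{1/n}+(2\tau)^{1/n}\leq 1+\varepsilon$ forces $\varepsilon\gtrsim\tau^{1/n}$, while keeping $M$ comparable to $s_0$ requires $\varepsilon\|u\|_\infty + \|\rho\|_\infty \lesssim s_0$, hence $\varepsilon\lesssim s_0$; together these give $\tau\lesssim s_0^n$. Theorem~\ref{local-modified-comparison-principle} then only permits $s<\varepsilon^n/(16B)\lesssim s_0^n$, so the iteration runs over a window far smaller than $s_0$, and the factor $1/\tau\gtrsim s_0^{-n}$ in your rearranged inequality enters the admissible function and hence $\kappa$. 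Tracking this through does not obviously recover the exponent $1/n$, and the presence of $\|u\|_\infty$ in the constraints threatens to make the final constant depend on $\|f\|_{L^p}$ rather than only on $\Omega,\omega,p$ as required. These issues may be surmountable, but the paper's auxiliary-function trick sidesteps them completely.
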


\begin{proof} Suppose that $\Omega \subset B(0,R)=: B_R$ (the ball with the origin
at $0$  and  radius $R>0$). We write $\omega^n = h \beta^n$ in $B_R$, where 
$0< h \in C^\infty(\bar B_R)$ and we extend $f, g $ onto $B_R$ by setting $f=g=0$ 
on  $B_R \setminus \Omega$.
Therefore, $fh, gh \in L^p(B_R, \beta^n)$. 
By  \cite{ko96}, there is a unique 
$w \in PSH(B_R)\cap C(\bar B_R)$ solving
$(dd^c w)^n = |fh - gh | \beta^n$ with $w = 0$ on $\partial B_R$. The 
stability estimate for the complex Monge-Amp\`ere equation proven in \cite{ko96} says that
\[
	\| w \|_{L^\infty(\bar B_R)} \leq 
	C_1 \| fh -gh \|_{L^p(B_R, \beta^n)}^\frac{1}{n},
\]
where $C_1$ depends only on $\Omega$, $p$.
Since
\[
	\left(\omega + dd^c (u + w) \right)^n \geq \omega_u^n + (dd^c w)^n
	= f \omega^n + |fh - gh| \beta^n \geq g \omega^n,
\]
and $w \leq 0$ in $\Omega$, we can apply
the domination principle  for $\varphi: = u + w$ and 
$\psi := v + \sup_{\partial \Omega} |u-v|$ to get that
$u + w\leq v + \sup_{\partial \Omega} |u-v|$ in $\Omega$. Hence, 
\[
	w -  \sup_{\partial \Omega} |u-v| \leq v -u.
\] 
Similarly, we obtain $ v -u \leq -w + \sup_{\partial \Omega} |u-v|$.
So
\begin{align*}
	| u -v | 
	\leq \| w \|_{L^\infty} + \sup_{\partial \Omega} |u-v|
\,&	\leq \sup_{\partial \Omega} |u-v| 
					+ C_1 \|f h -g h\|_{L^p(B_R, \beta^n)} ^\frac{1}{n} \\
\,&	\leq \sup_{\partial \Omega} |u-v| + C \|f  -g \|_{L^p(\Omega, \omega^n)} ^\frac{1}{n},
\end{align*}
where $C$ depends on $\Omega, p$ and $\sup_{\bar \Omega} h$.
\end{proof}

\begin{theorem}
\label{existence-local} 
In a $C^\infty$ strictly pseudoconvex domain
there exists a unique continuous solution to the Dirichlet
problem \eqref{dirichlet-problem}. 
\end{theorem}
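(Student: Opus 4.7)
The uniqueness part is immediate from the domination principle (Corollary~\ref{domination-principle}): two solutions $u_1, u_2 \in PSH(\omega)\cap C(\bar\Omega)$ agree on $\partial\Omega$ and have identical Monge-Amp\`ere measures, so the principle applied in both directions forces $u_1 = u_2$. For existence, my plan is to approximate the data and pass to the limit using the stability estimate (Theorem~\ref{stability-local}). The key preliminary input is smooth solvability: for $0 < f \in C^\infty(\bar\Omega)$ and $\phi \in C^\infty(\partial\Omega)$ on a $C^\infty$ strictly pseudoconvex $\Omega$, the Dirichlet problem~\eqref{dirichlet-problem} admits a smooth $\omega$-plurisubharmonic solution. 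This is classical Caffarelli-Kohn-Nirenberg-Spruck when $\omega = \beta$, and extends to a general Hermitian $\omega$ by the continuity method once strictly plurisubharmonic barriers (from the strict pseudoconvexity of $\partial\Omega$) give the boundary $C^2$ estimate and a Yau-type computation with the torsion corrections delivers the interior $C^2$ estimate.

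Granted this smooth solvability, I would pick $\phi_j \in C^\infty(\partial\Omega)$ with $\phi_j \to \phi$ uniformly and $0 < f_j \in C^\infty(\bar\Omega)$ with $f_j \to f$ in $L^p(\omega^n)$ (for instance by mollifying $f + \varepsilon_j$ with $\varepsilon_j \searrow 0$). Let $u_j$ denote the smooth solution for the data $(f_j, \phi_j)$. Theorem~\ref{stability-local} applied to the pair $(u_j, u_k)$ yields
\[
\| u_j - u_k \|_{L^\infty(\bar\Omega)} \leq \sup_{\partial\Omega} |\phi_j - \phi_k| + C \, \| f_j - f_k \|_{L^p(\omega^n)}^{1/n},
\]
so $\{u_j\}$ is Cauchy in $L^\infty(\bar\Omega)$. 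The uniform limit $u$ lies in $PSH(\omega) \cap C(\bar\Omega)$, satisfies $u|_{\partial\Omega} = \phi$, and by the weak continuity of the Monge-Amp\`ere operator under uniform convergence (Proposition~\ref{convergence}) obeys $\omega_u^n = f\,\omega^n$, finishing the proof.

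The main obstacle is the smooth existence result: in the K\"ahler background the a priori estimates up to the boundary are classical, but in the Hermitian setting the torsion terms involving $d\omega$ and $dd^c\omega$ that already appeared in~\eqref{curvature} must be absorbed into both the boundary barrier construction and the interior Laplacian estimate. Once that step is granted, the remainder is a soft consequence of the two main pillars of the paper: the stability estimate supplies the uniform Cauchy property, and Proposition~\ref{convergence} identifies the uniform limit as a weak solution. If one wished to avoid quoting smooth solvability, an alternative would be to build $u$ directly as the upper envelope of continuous $\omega$-plurisubharmonic subsolutions with boundary values $\le \phi$ and Monge-Amp\`ere mass $\ge f\omega^n$, using the domination principle to handle uniqueness of balayage and the stability estimate to upgrade semicontinuity to continuity; but the approximation route above is cleaner given what the paper has already established.
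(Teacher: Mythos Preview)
Your proposal is correct and follows essentially the same route as the paper: approximate $(\phi,f)$ by smooth data, invoke smooth solvability, use Theorem~\ref{stability-local} to get a Cauchy sequence in $C(\bar\Omega)$, pass to the limit via Proposition~\ref{convergence}, and conclude uniqueness from Corollary~\ref{domination-principle}. The only difference is that the paper dispatches your ``main obstacle'' in one line by citing Theorem~1.1 of Guan--Li \cite{GL} for the smooth Hermitian Dirichlet problem, so no separate discussion of torsion terms or barriers is needed.
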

\begin{proof}
Suppose that $\phi_j \in C^\infty(\partial \Omega)$ converges uniformly to $\phi$ and a
 sequence of smooth functions $f_j >0$  converges to $f$ in $L^p(\omega^n)$. 
 From Theorem 1.1 in \cite{GL}, it follows that 
 for each $j$ there exists a unique smooth
 solution $u_j \in PSH(\omega)$ of the corresponding Dirichlet problem
 \begin{equation*}
 \begin{cases}
 	(\omega + dd^c v)^n & =f_j \omega^n  \quad \mbox{ in }  \Omega,\\
	 v				&= \phi_j 	\quad	\mbox{ on } \partial \Omega.
 \end{cases}
 \end{equation*}
 Hence, from Theorem~\ref{stability-local} we get that the solutions $u_j$ form a Cauchy 
 sequence in $C(\bar\Omega)$. Thus, they converge uniformly  to $u$  in 
 $PSH(\omega) \cap C(\bar \Omega)$.
 Therefore $\omega_{u_j}^n$ converge weakly to $\omega_u^n$ by Proposition~\ref{convergence}. It means that 
 $u$ is a  continuous  solution to the Dirichlet problem \eqref{dirichlet-problem}.
  Moreover, by the domination principle (Corollary~\ref{domination-principle}) this 
  solution is  unique.  The proof is completed.
\end{proof}

\section{Existence of continuous solutions on a compact Hermitian manifold}
\label{global-dp}
Let $(X,\omega)$ be a compact Hermitian manifold of complex dimension $n$. 
The constant $B>0$ in
\eqref{curvature} is used throughout this section. We denote by $C$ 
 a generic positive constant depending only on $n,B$, which may vary from line to line. We use the notation $Vol_\omega(E) :=\int_E \omega^n$ for any Borel set $E$, and write $L^p(\omega^n)$ for $L^p(X, \omega^n)$.

\subsection{$L^\infty$ a priori estimates}
\label{apriori}

We first show how the modified comparison principle coupled with  pluripotential theory techniques leads to  $L^\infty$ {\em a priori} estimates. Recall that  for a Borel set $E \subset X$
\[
	cap_\omega(E) := \sup \left\{
						\int_E (\omega + dd^c \rho)^n : \, 
						\rho \in PSH(\omega), \, 
						0 \leq \rho \leq 1
						 \right\}.
\]

\begin{prop}[\cite{DK}, Corollary 2.4]
\label{inequality-vol-cap}
There are a universal number $0< \alpha = \alpha(X,\omega)$ 
and a uniform constant $0< C = C(X, \omega)$ such that for any Borel subset 
$E \subset X$  
\[
	Vol_{\omega}(E) \leq C \exp\left( \frac{-\alpha}{cap_\omega^\frac{1}{n}(E)} \right). 
\]
Consequently, by H\"older's inequality, for any $0\leq f \in L^p(\omega^n)$, $p>1$, 
\[
	\int_E f \omega^n 
	\leq C \|f\|_{L^p(\omega^n)} 
	\exp \left( - \frac{\tilde \alpha}{ cap_\omega^\frac{1}{n}(E)}\right),
\]
where $\tilde\alpha = \alpha/q$, $1/p+ 1/q =1$.
\end{prop}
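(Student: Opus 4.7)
The plan is to reduce the volume bound to the classical Alexander--Taylor inequality in $\co^n$ via a local-to-global comparison of pluripotential capacities; the $L^p$ statement will then follow from H\"older.

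First I would cover $X$ by finitely many coordinate charts $U_1,\dots,U_N$ biholomorphic to balls in $\co^n$, arranging that on each $\bar U_\nu$ there is a smooth strictly plurisubharmonic function $\rho_\nu$ with $dd^c\rho_\nu \geq \omega$ and $-1 \leq \rho_\nu \leq 0$. Given any Borel $E \subset X$, splitting $E = \bigcup_\nu E_\nu$ with $E_\nu \subset U_\nu$ and using subadditivity of $Vol_\omega$ together with the monotonicity $cap_\omega(E_\nu) \leq cap_\omega(E)$, I reduce the whole statement to bounding $Vol_\omega(E_\nu)$ on a single chart in terms of the global capacity.

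On each chart I would invoke the classical Alexander--Taylor inequality in $\co^n$: for any Borel $F \subset U_\nu$,
\[
Vol_\omega(F) \leq C\,\exp\!\left(-\alpha\, cap_L(F,U_\nu)^{-1/n}\right),
\]
where $cap_L(\cdot,U_\nu)$ denotes the usual Bedford--Taylor capacity on the chart, and this classical fact rests only on the relative extremal function and $L^2$-estimates for psh functions on balls. The technical core is then the comparison $cap_L(E_\nu, U_\nu) \leq C' \, cap_\omega(E_\nu)$. Given a local psh test function $v$ on $U_\nu$ with $-1 \leq v \leq 0$, the shift $v + \rho_\nu$ is psh with $dd^c(v+\rho_\nu) \geq dd^c v + \omega$. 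By applying a regularized maximum to glue a rescaled multiple of $v + \rho_\nu$ against a smooth $\omega$-psh extension defined on $X \setminus U_\nu'$ for some intermediate $U_\nu' \supset E_\nu$, I would produce an admissible $\tilde v \in PSH(\omega)$ with $0 \leq \tilde v \leq 1$ whose Monge--Amp\`ere mass on $E_\nu$ dominates $(dd^c v)^n$ up to a multiplicative constant, yielding the comparison.

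The second inequality then follows immediately from H\"older's inequality with $1/p+1/q=1$:
\[
\int_E f\,\omega^n \leq \|f\|_{L^p(\omega^n)}\, Vol_\omega(E)^{1/q} \leq C\,\|f\|_{L^p(\omega^n)}\exp\!\left(-\frac{\alpha}{q\, cap_\omega(E)^{1/n}}\right),
\]
so $\tilde\alpha = \alpha/q$. The main obstacle is the capacity comparison in the previous paragraph: on a K\"ahler manifold, $\omega$ admits local potentials and the gluing argument is essentially standard, whereas in the Hermitian setting $d\omega \neq 0$ forces torsion contributions into the intermediate computations which must be absorbed into the constants $C,\alpha$ without spoiling the exponential form of the estimate.
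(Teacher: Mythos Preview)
The paper does not supply its own proof of this proposition: it is quoted verbatim as \cite{DK}, Corollary~2.4, and only the H\"older consequence is indicated (already in the statement). Your outline---localize to finitely many charts, use the classical volume--capacity inequality in $\co^n$ on each chart, and then compare the local Bedford--Taylor capacity with the global $cap_\omega$ by gluing a local psh competitor to a global $\omega$-psh one via a regularized maximum---is exactly the standard route and is, in substance, the argument carried out in \cite{DK}. Your H\"older step is correct and matches the paper's one-line ``Consequently''.

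One remark on the soft spot you yourself flag. The capacity comparison $cap_L(E_\nu,U_\nu)\leq C'\,cap_\omega(E_\nu)$ is indeed the only place with content, but the Hermitian torsion you worry about does not actually enter here: the construction only requires, on each chart, a smooth strictly psh $\rho_\nu$ with $dd^c\rho_\nu\geq\omega$, which exists whether or not $d\omega=0$. Once you have such $\rho_\nu$, a suitable rescaling of $v+\rho_\nu$ glued against a constant (or a fixed global $\omega$-psh background) by a regularized max produces $\tilde v\in PSH(\omega)$ with range in $[0,1]$ and $(\omega+dd^c\tilde v)^n\geq c\,(dd^c v)^n$ on $E_\nu$; no $d\omega$ or $dd^c\omega$ terms appear because you are only using positivity of currents, not Stokes' theorem. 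So your sketch is correct, and the obstacle you anticipate is in fact absent for this particular estimate.
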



Let $h : \mathbb R_+ \rightarrow (0, \infty ) $ be an increasing function such that
\begin{equation}
\label{admissible}
	\int_1^\infty \frac{1}{x [h(x) ]^{\frac{1}{n}} }  \, dx < +\infty.
\end{equation}
In particular, $\lim_{ x \rightarrow \infty} h(x) = +\infty$. Such a function $h$ is called
{\em admissible}. If $h$ is admissible, then so is $A \, h$ for any number $A >0$.
Define
\[
	F_h(x) = \frac{x}{h(x^{-\frac{1}{n}})}.
\]
For such $F_h$ we consider the family of bounded $\omega$-psh functions 
such that their Monge-Amp\`ere measures satisfy
\begin{equation}
\label{magrowth}
	\int_E \omega_\varphi^n \leq F_h( cap_\omega (E)),
\end{equation}
for any Borel set $E \subset X$. It follows from Proposition~\ref{inequality-vol-cap}
that

\begin{corollary}
\label{lp-density} Let $\varphi\in PSH(\omega) \cap L^\infty(X)$.
If $\omega_\varphi^n = f \, \omega^n$ for $0 \leq f \in L^p(\omega^n)$, $p>1$, then
$\omega_\varphi^n$ satisfies \eqref{magrowth} for the admissible function 
$h_p(x) = C \|f\|_{L^p(\omega^n)} ^{-1}\exp(a x)$ with some universal number $a>0$.
\end{corollary}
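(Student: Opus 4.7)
The plan is to apply Proposition~\ref{inequality-vol-cap} directly to $f$ and rewrite the resulting exponential estimate in the $F_h$-form required by \eqref{magrowth}. Proposition~\ref{inequality-vol-cap} yields, for every Borel $E \subset X$,
\[
    \int_E f\,\omega^n
    \leq C_1\, \|f\|_{L^p(\omega^n)}\,
            \exp\!\bigl(-\tilde\alpha/cap_\omega(E)^{1/n}\bigr),
\]
with $\tilde\alpha = \alpha/q$ a universal constant.

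Admissibility of the candidate $h_p(x) = C\,\|f\|_{L^p(\omega^n)}\exp(ax)$ is immediate for any $a>0$, since the exponential factor makes $\int_1^\infty dx/\bigl(x\,h_p(x)^{1/n}\bigr)$ converge trivially. To verify \eqref{magrowth}, I would introduce the change of variables $t = cap_\omega(E)^{-1/n}$; writing $F_{h_p}(cap_\omega(E)) = t^{-n}/\bigl(C\|f\|_{L^p(\omega^n)}\exp(at)\bigr)$, the desired inequality $\int_E f\omega^n \leq F_{h_p}(cap_\omega(E))$ reduces to
\[
    C\, C_1\, \|f\|_{L^p(\omega^n)}^2\, t^n\, \exp\!\bigl((a-\tilde\alpha)t\bigr) \leq 1.
\]
Choosing $a \in (0,\tilde\alpha)$, say $a = \tilde\alpha/2$, the function $t \mapsto t^n \exp\!\bigl((a-\tilde\alpha)t\bigr)$ attains a finite maximum on $(0,\infty)$ at $t = n/(\tilde\alpha-a)$, and absorbing this maximum into the generic constant closes the estimate.

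There is no genuine obstacle here: the corollary is essentially a bookkeeping step that recasts the exponential-capacity bound of Proposition~\ref{inequality-vol-cap} in the $F_h$-format used later in the section. The only delicate point is the choice $a < \tilde\alpha$, so that the resulting exponential decay is strong enough to dominate the polynomial factor $t^n$ in the rearranged inequality; this is also what forces the universal parameter $a$ to be bounded above by $\tilde\alpha$, an ingredient depending only on $(X,\omega)$ and $p$.
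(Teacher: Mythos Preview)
Your argument is correct and matches the paper's approach exactly (the paper offers no proof beyond the remark that the corollary follows from Proposition~\ref{inequality-vol-cap}). One bookkeeping point worth flagging: your rearranged inequality carries the factor $\|f\|_{L^p}^2$, so after absorbing the bounded quantity $t^n e^{(a-\tilde\alpha)t}$ the coefficient $C$ in $h_p$ must scale like $\|f\|_{L^p}^{-2}$, which makes $h_p$ \emph{inversely} rather than directly proportional to $\|f\|_{L^p}$---this is a harmless imprecision in the paper's stated form of $h_p$, since only the exponential shape of $h$ is actually used in the subsequent estimates.
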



Our next theorem is a generalization of a priori estimates in \cite{ko98}, \cite{ko03}
from the K\"ahler setting to the Hermitian one.

\begin{theorem}
\label{kappa}
Fix $ 0 < \varepsilon <1$. Let $ \varphi, \psi \in PSH (\omega)\cap L^\infty(X)$ 
be such that $\varphi \leq 0$, and $ -1 \leq \psi \leq 0$. Set 
$m(\varepsilon) = \inf_X [ \varphi - (1-\varepsilon) \psi]$, and 
$
\varepsilon_0:=
	\frac{1}{3}\min\{
	\varepsilon^n, 
	\frac{\varepsilon^3}{16 B}, 
	4 (1-\varepsilon) \varepsilon^n, 
	4 (1-\varepsilon)\frac{\varepsilon^3}{16 B} \}
$.
Suppose that 
$\omega_\varphi^n$ satisfies  \eqref{magrowth} for an admissible 
function $h$. Then, for $0<D< \varepsilon_0$,
\[
	D \leq \kappa\left[ cap_\omega ( U(\varepsilon, D))\right],
\]
where $U(\varepsilon, D ) = \{ \varphi < (1- \varepsilon) \psi + m(\varepsilon) + D \}$, 
and the function $\kappa $ is defined on the interval $(0,cap_\omega(X))$ by the formula
\[
	\kappa \left ( s^{-n} \right) =
		4\, C_n  \left \{
			\frac{1}{ \left [ h ( s )\right]^{\frac{1}{n}} } 
			+ \int_{s}^\infty  \frac{dx}{x \left[ h (x) \right]^{\frac{1}{n}}}			
					\right \},
\]
with a dimensional constant $C_n$.
\end{theorem}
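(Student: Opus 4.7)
The plan is to adapt Ko\l odziej's iteration scheme from the K\"ahler setting (see \cite{ko98,ko03}) to the Hermitian one, with the modified comparison principle (Theorem \ref{modified-comparison-principle}) playing the role of the usual comparison principle. The argument has three stages: (1) establish a mass--capacity inequality of the form $(D_2-D_1)^n cap_\omega(U(\varepsilon,D_1))\le C\int_{U(\varepsilon,D_2)}\omega_\varphi^n$ for $0<D_1<D_2<\varepsilon_0$; (2) feed the right-hand side into hypothesis \eqref{magrowth} to get a self-referential bound on $cap_\omega(U(\varepsilon,\cdot))$; and (3) iterate by doubling capacity, integrating the resulting telescoping sum against the admissible function $h$ to recover the formula for $\kappa$.

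For stage (1) I would pick $\rho\in PSH(\omega)$ with $0\le\rho\le 1$ and $0<t\le\varepsilon/2$, and set $\tilde\psi:=[(1-\varepsilon)\psi+t\rho]/(1-\varepsilon/2)$. A direct check shows $\tilde\psi\in PSH(\omega)\cap L^\infty(X)$, and
\[
(1-\varepsilon/2)\,\omega_{\tilde\psi}=(\varepsilon-t)\omega+(1-\varepsilon)\omega_\psi+t\omega_\rho,
\]
a sum of three positive currents, so by multinomial expansion $\omega_{(1-\varepsilon/2)\tilde\psi}^n\ge t^n\omega_\rho^n$. Apply Theorem \ref{modified-comparison-principle} to the pair $(\varphi,\tilde\psi)$ with parameter $\varepsilon/2$. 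Writing $\tilde m:=\inf_X[\varphi-(1-\varepsilon)\psi-t\rho]$ and $V(s):=\{\varphi<(1-\varepsilon)\psi+t\rho+\tilde m+s\}$, the chain of inclusions $U(\varepsilon,s-t)\subset V(s)\subset U(\varepsilon,s+t)$ (valid because $\tilde m\in[m(\varepsilon)-t,m(\varepsilon)]$ and $0\le t\rho\le t$) gives, after taking the supremum over $\rho$,
\[
t^n cap_\omega(U(\varepsilon,s-t))\le\Bigl(1+\tfrac{sBC}{\varepsilon^n}\Bigr)\int_{U(\varepsilon,s+t)}\omega_\varphi^n.
\]
Substituting $D_1=s-t$, $D_2=s+t$ and using $D_2<\varepsilon_0$ to absorb the multiplicative factor into a dimensional constant yields the desired mass--capacity inequality.

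Stage (2) is a direct substitution: \eqref{magrowth} gives $\int_{U(\varepsilon,D_2)}\omega_\varphi^n\le cap_\omega(U(\varepsilon,D_2))/h(cap_\omega(U(\varepsilon,D_2))^{-1/n})$. For stage (3), writing $b(r):=cap_\omega(U(\varepsilon,r))^{-1/n}$, I would build inductively $D=D_0>D_1>\cdots>0$ so that $cap_\omega(U(\varepsilon,D_k))=2^{-k}cap_\omega(U(\varepsilon,D))$, equivalently $b(D_k)=2^{k/n}b(D)$. The mass--capacity inequality combined with \eqref{magrowth} gives $D_{k-1}-D_k\le C/h(b(D_{k-1}))^{1/n}$, and the classical integral comparison
\[
\sum_{k\ge 1}\frac{1}{h(2^{(k-1)/n}b(D))^{1/n}}\le\frac{1}{h(b(D))^{1/n}}+C_n\int_{b(D)}^\infty\frac{dx}{x\,h(x)^{1/n}}
\]
sums the increments to produce $D\le\kappa(cap_\omega(U(\varepsilon,D)))$; admissibility of $h$ guarantees convergence.

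The main obstacle, in my view, is stage (1): one has to choose $\tilde\psi$ so that it is genuinely $\omega$-psh and so that $\omega_{(1-\varepsilon/2)\tilde\psi}^n$ dominates $t^n\omega_\rho^n$, while simultaneously respecting the admissible range $s<\varepsilon^3/(16B)$ in Theorem \ref{modified-comparison-principle} and keeping the error factor $(1+sBC/\varepsilon^n)$ bounded. These competing constraints are exactly what force $\varepsilon_0$ to be the minimum of the four quantities in its definition. Once stage (1) is in place, stages (2) and (3) are essentially the same as in the K\"ahler case.
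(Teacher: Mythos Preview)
Your three-stage plan coincides with the paper's argument: the paper proves exactly your stage~(1) as a separate lemma (Lemma~\ref{capacity}, rescaled in Remark~\ref{caplevelset}), then runs the Ko\l odziej iteration for stages~(2)--(3). Two technical points are worth tightening.

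First, in stage~(1) the paper's auxiliary function is the convex combination $(1-t)\psi+t\rho$, and the modified comparison principle is applied with the \emph{same} parameter $\varepsilon$. Your choice $\tilde\psi=[(1-\varepsilon)\psi+t\rho]/(1-\varepsilon/2)$ with parameter $\varepsilon/2$ also works, but note that Theorem~\ref{modified-comparison-principle} then requires $s<(\varepsilon/2)^3/(16B)=\varepsilon^3/(128B)$, which is strictly smaller than the $\varepsilon_0$ in the statement. The paper's choice of parameter $\varepsilon$ is what makes the stated $\varepsilon_0$ sharp; with your setup you would prove the theorem for a smaller $\varepsilon_0$ (still sufficient for the applications, but not literally the stated one). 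Also, your displayed identity should read $\omega_{(1-\varepsilon/2)\tilde\psi}$ on the left, not $(1-\varepsilon/2)\omega_{\tilde\psi}$; you use the correct form immediately afterward.

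Second, in stage~(3) you cannot in general choose $D_k$ with $cap_\omega(U(\varepsilon,D_k))=2^{-k}cap_\omega(U(\varepsilon,D))$ exactly, because $s\mapsto cap_\omega(U(\varepsilon,s))$ is only left-continuous and may jump. The paper handles this by defining $s_{j+1}:=\sup\{0<s<s_j:a(s_j)>e\,a(s)\}$ and working with one-sided limits $a(s_j^+)$; the key inequalities $a(s_j)\le e\,a(s_{j+1}^+)$ and $a(s_{j+2}^+)\le e^{-1}a(s_j)$ substitute for your exact halving. Your telescoping and integral comparison then go through verbatim with the factor $e$ in place of $2$.
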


The following  lemma  is the  crucial step in the proof of the theorem. 
It is an estimate of the capacity of sublevel sets. 
The proof goes through in the Hermitian setting thanks to the modified comparison 
principle (Theorem~\ref{modified-comparison-principle}). 

\begin{lemma}
\label{capacity}
Fix $0 <\varepsilon  <1 $. Let $\varphi, \psi \in PSH(\omega) \cap L^\infty(X)$ be such that
$ -1 \leq \psi \leq 0$. 
Set $m(\varepsilon) = \inf_X [ \varphi - (1-\varepsilon) \psi]$ and
\[
	U(\varepsilon, s):= \{\varphi < (1-\varepsilon) \psi + m(\varepsilon) + s \}.
\]
For any 
$0 <s, t \leq  \frac{1}{3}\min\{\varepsilon^n, \frac{\varepsilon^3}{16 B} \}$ (with $B$ defined above) one has
\[
	[(1-\varepsilon)\, t ]^n \, cap_{\omega} (U(\varepsilon, s))
	\leq 	( 1 + C) \,  \int_{ U (\varepsilon, s + 4(1-\varepsilon)\,t) }
			\omega_\varphi^n.
\]
\end{lemma}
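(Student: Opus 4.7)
The plan is to test the capacity against an arbitrary $\rho\in PSH(\omega)$ with $0\le\rho\le 1$ and reduce the resulting integral inequality to a single application of the modified comparison principle (Theorem~\ref{modified-comparison-principle}) after replacing $\psi$ by a cleverly chosen function. The key device is the convex combination
\[
  \Psi := (1-t)\psi + t\rho \;\in\; PSH(\omega)\cap L^\infty(X).
\]
A direct calculation gives
\[
  \omega_{(1-\varepsilon)\Psi} \;=\; \varepsilon\,\omega + (1-\varepsilon)(1-t)\,\omega_\psi + (1-\varepsilon)t\,\omega_\rho,
\]
a sum of three positive $(1,1)$-forms; expanding the $n$-th wedge power and keeping only the pure $\omega_\rho^n$ summand yields
\[
  \omega_{(1-\varepsilon)\Psi}^n \;\geq\; [(1-\varepsilon)t]^n\,\omega_\rho^n,
\]
which is precisely how the prefactor $[(1-\varepsilon)t]^n$ in front of $cap_\omega$ will be produced.

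Next I apply Theorem~\ref{modified-comparison-principle} to the pair $(\varphi,\Psi)$ with the same $\varepsilon$ and the enlarged shift $s_{\mathrm{new}} := s + 2(1-\varepsilon)t$. Writing $V_\Psi(r) := \{\varphi < (1-\varepsilon)\Psi + m_\Psi(\varepsilon) + r\}$ with $m_\Psi(\varepsilon) := \inf_X[\varphi-(1-\varepsilon)\Psi]$, the bounds $-1\le\psi\le 0$ and $0\le\rho\le 1$ force
\[
  (1-\varepsilon)(\psi-\Psi) \;=\; (1-\varepsilon)t(\psi-\rho) \;\in\; \bigl[-2(1-\varepsilon)t,\,0\bigr],
\]
so taking infima gives $m(\varepsilon)-2(1-\varepsilon)t \le m_\Psi(\varepsilon)\le m(\varepsilon)$. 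Chasing the strict inequality defining $U(\varepsilon,s)$ in one direction and the one defining $V_\Psi(s_{\mathrm{new}})$ in the other then delivers the two set inclusions
\[
  U(\varepsilon,s) \;\subset\; V_\Psi(s_{\mathrm{new}}) \;\subset\; U\bigl(\varepsilon,\,s+4(1-\varepsilon)t\bigr).
\]

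The parameter hypothesis $s,t\le \tfrac{1}{3}\min\{\varepsilon^n,\,\varepsilon^3/(16B)\}$ is used precisely to guarantee that $s_{\mathrm{new}} < \min\{\varepsilon^n,\,\varepsilon^3/(16B)\}$, so Theorem~\ref{modified-comparison-principle} is applicable on $V_\Psi(s_{\mathrm{new}})$ and its multiplicative error $1 + s_{\mathrm{new}} B C/\varepsilon^n$ is uniformly bounded by $1+C$ with $C=C(n,B)$. Chaining the wedge-power bound, the two inclusions, and the modified comparison principle, then taking the supremum over admissible $\rho$, gives
\[
  [(1-\varepsilon)t]^n\,cap_\omega\bigl(U(\varepsilon,s)\bigr) \;\leq\; (1+C)\int_{U(\varepsilon,\,s+4(1-\varepsilon)t)} \omega_\varphi^n,
\]
which is the claimed estimate.

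The main obstacle I anticipate is the careful bookkeeping that produces the factor $4(1-\varepsilon)t$ on the right-hand side: one copy of $2(1-\varepsilon)t$ is absorbed into the gap $m(\varepsilon)-m_\Psi(\varepsilon)$ when enlarging $U(\varepsilon,s)$ into $V_\Psi(s_{\mathrm{new}})$, and a second copy appears when translating $(1-\varepsilon)\Psi$ back into $(1-\varepsilon)\psi$ in order to land in a sublevel set of the original form. Once this is laid out, the remaining ingredients are only positivity of wedge products of mixed Monge--Amp\`ere currents and a direct invocation of Theorem~\ref{modified-comparison-principle}.
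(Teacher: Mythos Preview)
Your proposal is correct and follows essentially the same route as the paper: introduce the convex combination $\Psi=(1-t)\psi+t\rho$, use the decomposition $\omega_{(1-\varepsilon)\Psi}=\varepsilon\omega+(1-\varepsilon)(1-t)\omega_\psi+(1-\varepsilon)t\,\omega_\rho$ to extract the factor $[(1-\varepsilon)t]^n$, sandwich $U(\varepsilon,s)\subset V_\Psi(s+2(1-\varepsilon)t)\subset U(\varepsilon,s+4(1-\varepsilon)t)$ via the bounds $m(\varepsilon)-2(1-\varepsilon)t\le m_\Psi(\varepsilon)\le m(\varepsilon)$, and then apply Theorem~\ref{modified-comparison-principle} on the middle set. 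The paper's write-up is slightly terser (it does not name $\Psi$ or spell out the wedge decomposition), but the argument and the bookkeeping of the two copies of $2(1-\varepsilon)t$ are identical.
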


\begin{proof}
Let $\rho \in PSH(\omega)$ be such that $0 \leq \rho \leq 1$. It follows that
\[
	U(\varepsilon, s) \subset 
	\left\{ \varphi < (1-\varepsilon) \left [ (1-t) \psi + t \, \rho
									\right ]
						+m(\varepsilon) + s\right\}.
\]
If we use the notation
\[
	m(\varepsilon, t ) := \inf_X \left( 
				\varphi - (1-\varepsilon) \left [ (1-t) \psi + t  \, \rho
									\right ]
					\right),
\]
then $ m(\varepsilon, t) \leq m(\varepsilon) \leq m(\varepsilon, t) + 2 (1-\varepsilon) \, t$.  Hence, 
\begin{align*}
	U(\varepsilon, s) &\subset V:=
	\left\{ \varphi < (1-\varepsilon) \left [ (1-t) \psi + t  \, \rho
									\right ]
						+m(\varepsilon, t) + s + 2 (1- \varepsilon)\, t \right\} \\
	&\subset U(\varepsilon, s + 4(1-\varepsilon)\, t).
\end{align*}
Then, Theorem~\ref{modified-comparison-principle} gives
\begin{align*}
	[(1-\varepsilon)t ]^n \, \int_{U(\varepsilon, s)}  (\omega + dd^c \rho)^n 
\,&	\leq \int_V \left ( \omega 
	+ (1-\varepsilon) \, dd^c \left [ (1-t) \psi + t \, \rho
						\right]  \right)^n \\
\,&	\leq \left (1+ \frac{s+ 2(1-\varepsilon ) \, t}{\varepsilon^n} \, C \right) 
		\int_V \omega_\varphi^n \\
\,&	\leq  (1+C)\int_{ U(\varepsilon, s + 4 (1- \varepsilon)\, t)} \omega_\varphi^n.
\end{align*}
Thus the lemma follows. 
\end{proof}

After rescaling $t$ the statement of  Lemma~\ref{capacity} may be rephrased
\begin{remark}
\label{caplevelset}
For any 
$0 <s \leq \frac{1}{3}\min\{\varepsilon^n, \frac{\varepsilon^3}{16 B} \}$, 
$0< t \leq \frac{4}{3} (1-\varepsilon) \min\{\varepsilon^n, \frac{\varepsilon^3}{16 B} \}$ we have
\[
	t^n \, cap_{\omega} (U(\varepsilon, s))
	\leq 	4^n C \,  \int_{ U (\varepsilon, s + t) }	\omega_\varphi^n,
\]
where $C$ is a dimensional constant.
\end{remark}

\begin{proof}[The proof of Theorem~\ref{kappa}]
For $0 < s < \varepsilon_0$, define 
\[
	a(s):= \left [ cap_\omega( U(\varepsilon, s)) \right]^\frac{1}{n} >0,
\]
and 
\[
	g(x) = [ h(x) ]^\frac{1}{n}.
\]
From Remark~\ref{caplevelset} and the property \eqref{magrowth} we infer that
for any $0 < s, t < \varepsilon_0$
\[
	t \, a(s) \leq C \, \frac{a(s+t)}{ g \left( \frac{1}{a(s+t)} \right)},
\]
where $C = 4\, (1+ C_n)^{1/n}$. 
We may assume $C = 1$ after muliplying $g$ by an appropriate constant. Hence,
\begin{equation}
\label{kappa-1}
	t 
	\leq  \frac{a(s+t)}{ a (s) \, g \left( \frac{1}{a(s+t)} \right)} .
\end{equation}
Let $0<D<\varepsilon_0$. Applying \eqref{kappa-1} for $t:= D-s$, and $0<s<D$ we obtain
\[
	D -s \leq \frac{a(D)}{a(s) g(\frac{1}{a(D)})} .
\] 
Set 
\[
	s_0 := \sup\{ 0< s < D: a(D) >  e \, a(s) \}.  
\]
Since $\lim_{t\rightarrow s^-} a (t) = a (s)$, so $s_0 < D$. It is clear that
$a(D) \leq e\, a(s_0^+) $, where $a(s^+) = \lim_{t\rightarrow s^+} a(t)$. It follows that 
\[
	D - s_0 \leq \lim_{s\rightarrow s_0^+} \frac{a(D)}{a(s) g(\frac{1}{a(D)})}
		     =	\frac{a(D)}{a(s_0^+) g(\frac{1}{a(D)})}
		     \leq \frac{e}{g(\frac{1}{a(D)})} . 
\]
Thus, the theorem will follow if we have the estimate of $s_0$ from above. We define 
by induction a strictly decreasing sequence which begins with $s_0$, and for $j \geq 0$, satisfies
\[
	s_{j+1} := \sup \{ 0< s < s_j : a( s_{j}) > e \, a(s)\}.
\]
It follows that 
\[
	a(s_j) \leq e \, a(s_{j+1}^+).
\]
By monotonicity of $a(t)$ and the definition of $s_{j+1}$ there exists 
$ s_{j+2} < t < s_{j+1}$ such that
\[
	e \, a(s_{j+2}^+) \leq e \, a(t) < a(s_{j}).
\]
Hence, we have 
\begin{equation}
\label{kappa-2}
		\frac{1}{e} a(s_{j+1}) \leq a(s_{j+2}^+) \leq \frac{1}{e} a(s_{j}) .
\end{equation}
We are ready to estimate $s_0$. Applying \eqref{kappa-1} for $s = s_{j+1}$, 
$t= s_{j} - s_{j+1}$ we have
\[
	s_{j} - s_{j+1} 
	\leq \lim_{x\rightarrow s_{j+1}^+} \frac{a (s_j)}{ a(x) g \left( \frac{1}{a(s_j)}\right)}
	=	\frac{a (s_j)}{ a(s_{j+1}^+) g \left( \frac{1}{a(s_j)}\right)}.
\]
Then, using the first inequality in \eqref{kappa-2}, we get 
\[
	s_j - s_{j+1} \leq \frac{e}{g \left( \frac{1}{a(s_j)}\right)}.
\]
Setting 
\[
	x_j := \frac{1}{a(s_j)},
\]
we have, using the second inequality in \eqref{kappa-2} and 
$a(s_{j+2}) \leq a(s_{j+2}^+)$,
\[
	\frac{x_{j+2} - x_j}{x_{j+2} g(x_{j+2})} 
		= \frac{a(s_j) - a(s_{j+2})}{a(s_{j})} \;
		\frac{1}{ g\left( \frac{1}{a(s_{j+2})} \right)}
		\geq  \frac{e-1}{ e }      \frac{1} {g \left( \frac{1}{a(s_{j+2}) }\right)}.
\]
Combining the last two estimates we have 
\[
	s_{j+2} - s_{j+3} 
	\leq \frac{e^2}{e-1} \; \frac{x_{j+2} - x_j}{x_{j+2} \,  g(x_{j+2})}
	\leq \frac{e^2}{e-1} \; \int_{x_j}^{x_{j+2}} \frac{dx}{x \, g(x) },
\]
as $x  g(x)$ is an increasing function.
Therefore,
\[
	s_2 = \sum_{j=0}^\infty (s_{j+2} - s_{j+3}) 
		\leq \frac{e^2}{e-1} \;
			 \sum_{j=0}^\infty \int_{x_j}^{x_{j+2}} \frac{dx}{x \, g(x) }.
\]
Since
\[
	 \sum_{j=0}^\infty \int_{x_j}^{x_{j+2}} \frac{dx}{x \, g(x) } 
	 \leq 2 \int_{x_0}^\infty 	\frac{dx}{x \, g(x) } 
	 \leq 2  \int_{\frac{1}{a(D)}}^\infty 	\frac{dx}{x \, g(x) }, 
\]
and
\[
	s_0 = (s_0- s_1) + (s_1-s_2) +s_2 
		\leq \frac{2 \, e}{g\left( \frac{1}{a(D)}\right)} + s_2,
\]
we have
\[
	s_0 \leq \frac{2 \, e}{g\left( \frac{1}{a(D)}\right)}
		+  \frac{2 e^2}{e-1}  \int_{\frac{1}{a(D)}}^\infty 	\frac{dx}{x \, g(x) }.
\]
Consequently
\[
	D  \leq 
				\frac{3 e}{g \left( \frac{1}{a(D)} \right) } 
				+\frac{2 e^2}{e-1} \int_{\frac{1}{a(D)}}^\infty  \frac{dx}{x \, g(x)} . 
\]
This is equivalent to the statement of the
 theorem.
\end{proof}

In the next step, we will see how Theorem~\ref{kappa} implies a   
$L^\infty$ {\em a priori} bound on the solutions of the Monge-Amp\`ere equation
with the right hand side in $L^p$, $p>1$. 

Suppose that $\varphi\in PSH(\omega)\cap L^\infty(X)$, $\sup_{X} \varphi = 0$
satisfies
\begin{equation}
\label{maa}
	\omega_\varphi^n = f \omega^n,
\end{equation}
where $0\leq f \in L^p(\omega^n)$, $p>1$. Then, $\omega_\varphi^n$ satisfies 
\eqref{magrowth} for $h(x)= C \|f\|_{L^p(\omega^n)}^{-1} \exp(ax)$, $a>0$
(Corollary~\ref{lp-density}).
Let $\hbar$ be the inverse function of $\kappa$ in Theorem~\ref{kappa}.
Then, $\hbar$ is also an increasing function. 

\begin{corollary}
\label{fomula-Linfty-bound}
Let $\varphi, f$ be as in \eqref{maa}. 
There exists a constant $0 < H = H(h)$, depending only on $h$, $X$, 
and $\omega$ such that
\begin{equation}
\label{L^infty}
	- H \leq \varphi \leq 0.
\end{equation}
Moreover, we have  for $b \geq 1$,
\begin{equation}
\label{changeL^infty}
	H (b^{-n} h) \leq b H (h).
\end{equation}
\end{corollary}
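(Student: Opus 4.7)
The upper bound $\varphi\le 0$ is immediate from $\sup_X\varphi = 0$, so the task reduces to bounding $M := -\inf_X\varphi$ from above. The heart of the plan is to apply Theorem~\ref{kappa} with $\psi\equiv 0\in PSH(\omega)$, which lies in $[-1,0]$. With this choice $m(\varepsilon)=-M$ and $U(\varepsilon,D) = \{\varphi < -M+D\}$, so the theorem produces the lower bound
\[
 \hbar(D) \le cap_\omega\bigl(\{\varphi<-M+D\}\bigr), \qquad 0<D<\varepsilon_0.
\]
To convert this into an upper bound on $M$, I will pair it with a complementary upper bound on the same capacity that decays in $M$.

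For the complementary upper bound, first apply Remark~\ref{caplevelset} (the level-set form of Lemma~\ref{capacity}) to dominate the capacity by a Monge--Amp\`ere mass on a slightly bigger sublevel set, $t^n\,cap_\omega(\{\varphi<-M+D\})\le C\int_{\{\varphi<-M+D+t\}}\omega_\varphi^n$. Then use H\"older with $\omega_\varphi^n = f\omega^n$, $f\in L^p$, to majorize the integral by $\|f\|_{L^p} Vol_\omega(\cdot)^{1-1/p}$. Finally, Chebyshev together with the standard $L^1$-compactness bound $\int_X(-\varphi)\omega^n \le A(X,\omega)$ for normalized $\omega$-psh functions on the compact Hermitian manifold gives $Vol_\omega(\{\varphi<-s\})\le A/s$. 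Chaining the three estimates with the lower bound from Theorem~\ref{kappa} and solving for $M$ yields, for $M>D+t$,
\[
 M \le D+t + A\Bigl(\tfrac{C\|f\|_p}{t^n\hbar(D)}\Bigr)^{p/(p-1)},
\]
which, specialized to $D=t=\varepsilon_0/4$, produces an explicit $H(h)$ depending only on $h$ (which absorbs $\|f\|_p$ through $h_p(x) = C\|f\|_p e^{ax}$), $X$, and $\omega$.

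For the scaling $H(b^{-n}h)\le b H(h)$ with $b\ge 1$, tracing the definitions gives $g\mapsto b^{-1}g$, $\kappa\mapsto b\kappa$, and hence $\hbar_{b^{-n}h}(D) = \hbar_h(D/b)$; simultaneously $\|f\|_p\mapsto b^{-n}\|f\|_p$ in $h_p$. Substituting these into the explicit formula for $H$ and rescaling the free parameter $D\mapsto bD$ when admissible, the prefactor $b^{-np/(p-1)}$ combines with $1/\hbar_h(D/b)^{p/(p-1)}$ to leave at most one factor of $b$ overall, yielding the claimed linear dependence. The main obstacle I anticipate is the careful bookkeeping across the three chained estimates (especially through the Hermitian constant $B$ hidden in $\varepsilon_0$) together with handling the case where $bD$ exceeds $\varepsilon_0$, which forces choosing a smaller $D$ in the scaled problem and verifying that the resulting bound is still $\le bH(h)$.
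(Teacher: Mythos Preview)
Your approach to \eqref{L^infty} is correct but more circuitous than the paper's. Both start identically with Theorem~\ref{kappa} at $\psi=0$, $\varepsilon=1/2$, yielding $\hbar(D)\le cap_\omega(\{\varphi<-M+D\})$. For the complementary upper bound, the paper invokes directly Proposition~2.5 of \cite{DK}, which gives the $f$-independent estimate
\[
cap_\omega(\{\varphi<\inf_X\varphi+s\})\le \frac{C\,|||\varphi|||_{L^1(\omega^n)}}{|\inf_X\varphi+s|},
\]
and combines the two to obtain $|\inf_X\varphi|\le s + C\,|||\varphi|||_{L^1}/\hbar_h(s)$. Your route through Remark~\ref{caplevelset}, H\"older, and Chebyshev also works, but it inserts an extra explicit factor of $\|f\|_p$ into $H$; in the paper's bound, $\|f\|_p$ enters only implicitly through $\hbar_h$.

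This difference becomes a genuine obstacle in the scaling claim \eqref{changeL^infty}. In the paper's formula only $\hbar_h$ changes under $h\mapsto b^{-n}h$, namely $\hbar_{b^{-n}h}(s)=\hbar_h(s/b)$, and the decisive step is the inequality
\[
b\,\hbar(x/b)\ge \hbar(x)\qquad(b\ge 1,\ 0<x<\varepsilon_0),
\]
which is checked from the explicit form of $\kappa$ for the exponential $h$. This yields $s+C/\hbar(s/b)\le b\bigl(s+C/\hbar(s)\bigr)$ cleanly, with no restriction on the size of $b$. In your scheme you propose either (i) rescaling $D\mapsto bD$, which works algebraically but---as you yourself flag---forces $bD<\varepsilon_0$ and hence breaks for large $b$; or (ii) keeping $D$ fixed, in which case you must control $1/\hbar_h(D/b)$ against $1/\hbar_h(D)$. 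That is exactly the displayed inequality above, and you neither state nor prove it; the sentence that ``$b^{-nq}$ combines with $1/\hbar_h(D/b)^{q}$ to leave at most one factor of $b$'' is precisely the place where this quantitative input is required and is not justified as written. The clean repair is to replace your three-step upper capacity bound by the $f$-independent one from \cite{DK}, so that only the single $\hbar$-factor has to be tracked under scaling, and then verify $b\,\hbar(x/b)\ge\hbar(x)$ directly from the formula for $\kappa$.
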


\begin{proof}
Applying Theorem~\ref{kappa} for $\psi=0$, and $\varepsilon = 1/2$,
 we have
\begin{equation}
\label{hbar-cap}
	s \leq \kappa \left [ cap_\omega \left( \{\varphi < \inf_X \varphi + s \}\right)\right]
	\Rightarrow 
	\hbar( s ) \leq cap_\omega \left( \{\varphi < \inf_X \varphi + s \}\right)
\end{equation}
for $0< s<\varepsilon_0$. Moreover, Proposition 2.5 in \cite{DK} says that
\[
	cap_\omega\left( \{\varphi < \inf_X \varphi + s \}\right)
	\leq \frac{C ||| \varphi |||_{L^1(\omega^n)}}{ | \inf_X \varphi +s| } ,
\]
where $C$ and
\[
	||| \varphi |||_{L^1(\omega^n)}
		:= \sup \left \{ \int_X |\varphi| \omega^n :
				 \varphi \in PSH(\omega), \, \sup_X \varphi =0 \right\}
\]
are uniform constants.
Two last inequalities imply
\[
	\hbar(s) \leq \frac{C}{ | \inf_X \varphi +s| } ||| \varphi |||_{L^1(\omega^n)}.
\]
Therefore, 
\[
	| \inf_X \varphi | \leq s + \frac{C ||| \varphi |||_{L^1(\omega^n)}}{\hbar(s)} 
\]
for $0 < s < \varepsilon_0$.  This gives \eqref{L^infty}. 
In order to obtain \eqref{changeL^infty}, we proceed as follows.
Let $\phi \in PSH(\omega) \cap L^\infty(X)$, $\sup_X \phi =0$, be such that 
for any Borel set $E$
\[
	\int_E \omega_\phi^n \leq b^n F_h(cap_\omega(E)).
\]
It follows from the formula for the function $\kappa$  in  Theorem~\ref{kappa} that the function $\kappa'$ for $b^{-n} h$ is  $ b \kappa$. The above argument implies that
\[
	|\inf_X \phi | \leq s + \frac{C ||| \varphi |||_{L^1(\omega^n)}}{\hbar(\frac{s}{b})}, 
\]
 where we used the fact that the inverse of
$
	\kappa' ( . )= b \kappa ( .)
$
is
$
	\hbar'( . ) = \hbar ( \frac{1}{b} \; .).
$
From the formula for the
function $\kappa$ associated to the admissible function $h(x)= C \exp(ax)$, $a>0$,  it follows that for $b\geq 1$, $0< x < \varepsilon_0$, 
\[
	b \hbar(\frac{x}{b})\geq \hbar(x).
\]
Thus, for $0<s<\varepsilon_0$,
\[
	|\inf_X \phi | \leq 
	b \left(\frac{s}{b} 
	+ \frac{C ||| \varphi |||_{L^1(\omega^n)}}{b\hbar(\frac{s}{b})} \right)
	\leq b \left(s
	+ \frac{C ||| \varphi |||_{L^1(\omega^n)}}{\hbar(s)} \right).
\]
The corollary follows.
\end{proof}

\subsection{Weak solutions to the complex Monge-Amp\`ere equation}
\label{weak-solution}

In this section we are going to study the existence of weak solutions for the Monge-Amp\`ere equation on $X$.
Let $0 \leq f \in L^p(\omega^n )$, $p>1$. We wish to solve the  equation
\begin{equation}
\label{ma}
\begin{cases}
	u \in PSH(X , \omega) \cap L^\infty(X), \\
	(\omega + dd^c u)^n = f \, \omega^n.
\end{cases}
\end{equation}
In general $\omega$ is not closed, and then  the appropriate statement of \eqref{ma} is that
 there exist a constant 
$c>0$ and a bounded (or continuous) $\omega$-psh function $u$ such that 
\begin{equation}
\label{maeh}
	(\omega + dd^c u)^n = c \, f \, \omega^n.
\end{equation}

\begin{remark}
\label{no-bounded-solution}
If $f \equiv 0$, then the equation \eqref{maeh} has no bounded solution.
\end{remark}

\begin{proof}
It is a immediate consequence of the inequality \eqref{lowermass} below
as an open subset has a positive capacity.
\end{proof}

\begin{theorem}
\label{existence}
Let $0\leq f \in L^p(\omega^n)$, $p>1$, be such that $\int_X f \omega^n >0$. 
There exist a constant $ c>0$ and $u\in PSH(\omega)\cap C(X)$ satisfying the
equation \eqref{maeh}.
\end{theorem}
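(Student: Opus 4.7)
\medskip

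\noindent\textbf{Proof proposal.} The strategy is a continuity/approximation argument: reduce to the smooth case by Tosatti--Weinkove \cite{TW2}, obtain uniform $L^\infty$ bounds via Corollary~\ref{fomula-Linfty-bound}, and upgrade convergence to uniform convergence by a stability estimate built from the tools of Sections~\ref{global-cp} and \ref{apriori}.

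First I would approximate. Choose smooth $f_j>0$ converging to $f$ in $L^p(\omega^n)$ with $\int_X f_j\,\omega^n\geq\tfrac12\int_X f\,\omega^n>0$. By \cite{TW2}, for each $j$ there exist a unique constant $c_j>0$ and a smooth $u_j\in PSH(\omega)$, normalized by $\sup_X u_j=0$, such that $(\omega+dd^c u_j)^n=c_j f_j\,\omega^n$. The constants $c_j$ are controlled from above and below: integrating the equation, $c_j\int_X f_j\,\omega^n=\int_X\omega_{u_j}^n$, and the right-hand side is bounded above and below by constants depending only on $\omega$ and $\|u_j\|_\infty$ using \eqref{curvature} and integration by parts (the ``bad'' terms involve $dd^c\omega$ and $d\omega\wedge d^c\omega$ integrated against $u_j$ times currents of the form \eqref{combination}). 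Together with the uniform lower bound on $\int_X f_j\,\omega^n$ this yields $0<c_0\leq c_j\leq c_\infty$. Since $\|c_jf_j\|_{L^p(\omega^n)}$ is then uniformly bounded, Corollary~\ref{fomula-Linfty-bound} gives $\|u_j\|_\infty\leq H$ uniformly in $j$.

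The main step is to show $\{u_j\}$ is Cauchy in $C(X)$. My plan is to prove a global stability statement of the form
\[
\|u_j-u_k\|_\infty\leq C\,\|c_j f_j-c_k f_k\|_{L^p(\omega^n)}^{\gamma}
\]
for some $\gamma>0$, adapting the capacity argument of Section~\ref{apriori} to the Hermitian setting. Set $M_{jk}:=\sup_X(u_k-u_j)$ and, for $0<\varepsilon<1$ and small $s>0$, consider the sublevel set $U(\varepsilon,s)=\{u_j<(1-\varepsilon)u_k+m(\varepsilon)+s\}$. By the modified comparison principle (Theorem~\ref{modified-comparison-principle}),
\[
\int_{U(\varepsilon,s)}\omega_{(1-\varepsilon)u_k}^n\leq\bigl(1+\tfrac{Cs}{\varepsilon^n}\bigr)\int_{U(\varepsilon,s)}\omega_{u_j}^n.
\]
Since $\omega_{(1-\varepsilon)u_k}^n\geq(1-\varepsilon)^n\omega_{u_k}^n+\varepsilon^n\omega^n$ term-by-term (using the concavity-type inequality for mixed Monge--Amp\`ere measures) and the right-hand side is $(1+o(1))c_j\int_{U(\varepsilon,s)}f_j\,\omega^n$, the $L^p$-closeness of $c_jf_j$ and $c_kf_k$, coupled with the capacity--volume estimate of Proposition~\ref{inequality-vol-cap} applied to the ``exceptional'' set where $u_k-u_j$ is large, forces $\mathrm{cap}_\omega(\{u_k-u_j>t\})$ to be small uniformly for $t$ above a threshold proportional to $\|c_jf_j-c_kf_k\|_{L^p}^\gamma$. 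Feeding this capacity bound into Theorem~\ref{kappa} (applied in the form leading to Corollary~\ref{fomula-Linfty-bound}, with the scaling \eqref{changeL^infty} absorbing the factor introduced by swapping the reference measure) yields the desired $L^\infty$ estimate. Symmetrizing in $(j,k)$ gives the Cauchy property.

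Granted uniform convergence, extract a subsequence with $c_j\to c\in[c_0,c_\infty]$ and $u_j\to u$ uniformly, so $u\in PSH(\omega)\cap C(X)$. Proposition~\ref{convergence} yields $\omega_{u_j}^n\to\omega_u^n$ weakly, and $c_jf_j\,\omega^n\to cf\,\omega^n$ weakly as well, giving $(\omega+dd^c u)^n=cf\,\omega^n$. I expect Step~2 (uniform stability) to be the main technical obstacle: the Hermitian setting rules out the Kähler trick of comparing against $(1-t)u+tv$ and using closedness to get a clean energy identity, so the stability must be extracted from the capacity machinery (Theorem~\ref{modified-comparison-principle}, Theorem~\ref{kappa}, and the scaling \eqref{changeL^infty}) rather than from a direct integration by parts.
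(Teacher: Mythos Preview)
Your overall architecture matches the paper's, but there are two genuine gaps.

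\textbf{Circularity in the bounds on $c_j$.} You propose to bound $c_j$ via $c_j\int_X f_j\,\omega^n=\int_X\omega_{u_j}^n$, estimating the right side ``by constants depending only on $\omega$ and $\|u_j\|_\infty$'', and then to deduce $\|u_j\|_\infty\leq H$ from Corollary~\ref{fomula-Linfty-bound} once $c_j$ is bounded. This is circular: integration by parts on $\int_X\omega_{u_j}^n$ produces terms $\int_X(dd^c u_j)^k\wedge\omega^{n-k}$ with $k\geq 2$, which after Stokes involve $du_j\wedge d^c u_j$ against lower-order forms and are \emph{not} controlled by $\|u_j\|_{L^1}$ alone. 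The paper breaks the loop differently. For the \emph{upper} bound on $c_j$ it uses the pointwise AM--GM inequality $\omega_{u_j}\wedge\omega^{n-1}\geq(c_jf_j)^{1/n}\omega^n$, so one only needs $\int_X\omega_{u_j}\wedge\omega^{n-1}$; this is linear in $u_j$, and a single Stokes bounds it by $B\int_X|u_j|\,\omega^n$, which is uniform since $\sup_X u_j=0$. For the \emph{lower} bound on $c_j$ it uses a direct capacity argument: Remark~\ref{caplevelset} gives $t^n\,cap_\omega(\{u_j<S_j+s\})\leq C\,c_j\|f_j\|_p[Vol_\omega(X)]^{1/q}$, while Theorem~\ref{kappa} forces the left side to stay above a fixed positive number; hence $c_j$ cannot tend to $0$. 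Neither step uses $\|u_j\|_\infty$.

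\textbf{The Cauchy step.} You aim for a quantitative stability $\|u_j-u_k\|_\infty\leq C\|c_jf_j-c_kf_k\|_{L^p}^\gamma$, with the $L^p$-closeness of the right-hand sides as the source of smallness. The paper does not prove this and takes a simpler route. After the uniform $L^\infty$ bound it passes to a subsequence that is Cauchy in $L^1(\omega^n)$ and argues by contradiction: if $\inf_X(u_k-u_j)\leq-4\varepsilon$ then $U(\varepsilon,s+t)\subset\{|u_k-u_j|>\varepsilon\}$, so by H\"older
\[
\int_{U(\varepsilon,s+t)}\omega_{u_k}^n\leq \|c_kf_k\|_p\,[Vol_\omega(\{|u_k-u_j|>\varepsilon\})]^{1/q}.
\]
The smallness comes from the \emph{volume} of $\{|u_k-u_j|>\varepsilon\}$, which tends to $0$ by Chebyshev and the $L^1$-Cauchy property; the difference $c_jf_j-c_kf_k$ never enters. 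Feeding this into Remark~\ref{caplevelset} and then Theorem~\ref{kappa} yields the contradiction. Your sketch conflates these two possible inputs and, as written, does not explain how $\|c_jf_j-c_kf_k\|_{L^p}$ alone would control $cap_\omega(U(\varepsilon,s))$; the mixed-term inequality you invoke gives $\omega_{(1-\varepsilon)u_k}^n\geq(1-\varepsilon)^n\omega_{u_k}^n$ on the left, but on the right you still only have $\int_{U(\varepsilon,s)}c_jf_j\,\omega^n$, not a difference of densities.
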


\begin{proof} 
Choose $f_j \in L^p(\omega ^n )$ smooth, strictly positive and  converging  to  $f$ 
in $L^p(\omega^n)$. By a theorem of Tosatti and Weinkove \cite{TW2}, for each 
$j \geq 1$, there exist a unique $u_j \in PSH( \omega) \cap C^\infty(X)$ with 
$\sup_X u_j =0$ and a unique constant $c_j>0$ such that 
\begin{equation}
\label{as}
	(\omega + dd^c u_j)^n = c_j \, f_j \, \omega^n.
\end{equation}

\begin{lemma}
\label{uniform-L^p}
The sequence $\{c_j\}$ is bounded away from $0$ and bounded from above. 
In particular, the family $\{c_j f_j\}$ is bounded in $L^p(\omega^n)$.
\end{lemma}

\begin{proof}
We first show that $c_j$'s are uniformly bounded from above. 
Since $f_j \rightarrow f$ in $L^1(\omega^n)$, we also have 
$f_j^\frac{1}{n} \rightarrow f^\frac{1}{n}$
in $L^1(\omega^n)$. Because $\int_X f \omega^n>0$, $\int_X f^\frac{1}{n} \omega^n >0$
one obtains
\[
	\int_X f_j^\frac{1}{n} \omega^n 
	> \frac{\int_X f^\frac{1}{n} \omega^n}{2}
	>0
\] 
for   $ j > j_0$ ($j_0 \geq 1$ depends on $f$). 
The pointwise arithmetic-geometric means inequality  implies that
\[
	(\omega + dd^c u_j) \wedge \omega^{n-1}
	\geq \left[\frac{(\omega + dd^c u_j)^n}{\omega^n}\right]^\frac{1}{n} \omega^n 
	= (c_j \, f_j)^\frac{1}{n} \omega^n. 
\]
Hence,
\[
	c_j^\frac{1}{n} \int_X f_j^\frac{1}{n} \omega^n 
		\leq \int_X (\omega +dd^c u_j) \wedge \omega^{n-1}.
\]
It follows that for $j>j_0$,
\begin{equation}
\label{L1b}
	c_j^\frac{1}{n} 
	\leq  \frac{2}{\int_X f^\frac{1}{n} \omega^n} \int_X (\omega +dd^c u_j) \wedge \omega^{n-1}.
\end{equation}
To end the proof we need to show that the right hand side is uniformly bounded
from above. 
Since  $\sup_X u_j = 0$, it follows  that
\begin{equation}
	\int_X |u_j| \omega^n \leq C_1,  
\end{equation}
with a uniform constant $C_1$ (see e.g.  \cite{DK}, Proposition 2.5). 
Hence, using the Stokes theorem we have
\begin{align*}
	\int_X dd^c u_j \wedge \omega^{n-1} 
&	= \int_X u_j \wedge dd^c(\omega^{n-1}) \\
&	\leq B \int_X |u_j| \omega^n \\
&	\leq B \, C_1. 
\end{align*}
 Combining this with \eqref{L1b} we conclude that
$\{c_j\}$ is bounded from above.

It remains to verify that $\{c_j\}$ is bounded away from $0$. 
Applying Remark~\ref{caplevelset} for $\varepsilon = 1/2$, $\psi =0$ and 
$0 \geq \varphi \in PSH(\omega) \cap L^\infty(X)$, with 
 $S= \inf_X \varphi $, we get 
for $0<s,t < \varepsilon_0$,
\begin{equation}
\label{lowermass}
	t^n \, cap_\omega( \varphi < S + s) 
		\leq C \int_{\{\varphi < S+ s + t\}}  \omega_\varphi^n.
\end{equation}
From  Remark~\ref{caplevelset} for $\varphi:= u_j$ with $\inf_X u_j = S_j$ and the H\"older inequality, it follows that, 
for $0 < s,t < \varepsilon_0$,
\begin{align*}
	t^n cap_\omega (u_j < S_j + s ) 
\, & 	\leq C \int_{\{ u_j < S_j + s +t\}} c_j \, f_j  \omega^n\\
\, & 	\leq C \, c_j \|f_j \|_{L^p(\omega^n)} [Vol_\omega( \{ u_j < S_j + s + t\})]^\frac{1}{q},
\end{align*} 
where $1/p+1/q =1$. Therefore, for fixed  $0< s= t < \varepsilon_0 ,$
\[
	cap_\omega (u_j < S_j + s ) 
	 \leq s^{-n}  C\, c_j \|f _j \|_{L^p(\omega^n)} [Vol_\omega( \{ u_j < S_j + 2 s\})]^\frac{1}{q}
	 := c_j\,  C_1  s^{-n} ,
\]
with $C_1$ depending also on $X$ and $f$.
From Theorem~\ref{kappa} we know that 
\[
	s \leq \kappa(cap_\omega(\{ u_j < S_j + s \}))
	\leq \kappa( c_j \, C_1  s^{-n}). 
\]
Since $\lim_{x\rightarrow 0^+} \kappa(x) = 0$, 
$c_j$ must be uniformly bounded  away from $0$. 
\end{proof}

We proceed to finish the proof of the theorem.

\medskip
{\em Uniform bound of  $||u_j ||_{L^{\infty } }$ }.  Since $c_j\, f_j$ are uniformly
bounded in $L^p(\omega ^n )$, the $L^\infty$ a priori estimate from \cite{DK} 
(or Corollary~\ref{fomula-Linfty-bound})
 implies that $\{u_j\}$ are uniformly bounded. Thus there exists $H >0$ such that 
$-H \leq  u_j \leq 0$ for every $j$.  By rescaling 
we may assume from now on that $H = 1$. Moreover, by passing to a subsequence,
it is  assumed that $\{u_j\}_{j=1}^\infty$ is a Cauchy sequence  in $L^1(\omega ^n  )$ and $0<c = \lim c_j .$

\medskip
{\em The existence of a  continuous solution.} 
Let us  use the notation 
\[
	S_{kj}:=  \inf_X (u_k - u_j) \leq 0,\quad \quad 
	M_{kj}:=  \sup_X (u_k - u_j) \geq 0.
\]
We are going to show that both $S_{kj} \rightarrow 0$ and $M_{kj} \rightarrow 0$ 
as $k,j \rightarrow +\infty$,
 arguing by contradiction. Suppose that $S_{kj}$ does not converge to $0$ as $k, j \rightarrow  +\infty$.
So there exists $0 < \varepsilon <1$
 such that 
\[
	S_{kj} \leq - 4 \, \varepsilon,
\]
for arbitrarily large  $k\neq j$. In order to simplify the notation, we write
$\varphi: = u_k$, $\psi := u_j$, and $S:= S_{kj}$, for such a pair $j,k$.  Put 
\[
	m(\varepsilon):= \inf_X \left[ \varphi - (1 -\varepsilon) \psi \right].
\]
It follows that
\[
	m(\varepsilon) \leq  S . 
\]
Applying Remark~\ref{caplevelset} we have, for any 
$0 <s, t < \varepsilon_0$ (see the definition in Theorem~\ref{kappa}),
\begin{equation}
\label{end-1}
	t^n \, cap_{\omega} (U(\varepsilon, s))
	\leq 	C 
		\,  \int_{ U (\varepsilon, s + t) }	\omega_\varphi^n,
\end{equation}
where
\[
	U(\varepsilon, s):= \{\varphi < (1-\varepsilon) \psi + m(\varepsilon) + s \}.
\]
Since $-1 \leq \varphi, \psi \leq 0$, $S < - 4\varepsilon$, $0<s , t < \varepsilon _0$, we have the following  inclusions
\[
	U(\varepsilon, s+t) \subset \{ \varphi < \psi + S + \varepsilon +s + t \}
	\subset \{ \varphi < \psi - \varepsilon \}
	\subset \{ |\varphi - \psi | > \varepsilon\}.
\]
Thus, from \eqref{end-1} and H\"older's
inequality, with $1/p + 1/q =1$, we get that
\begin{align*}
	t^n \, cap_{\omega} (U(\varepsilon, s))
\,&	\leq 	C \,  \int_{ \{ | \varphi - \psi | > \varepsilon\} }	\omega_\varphi^n \\
\,&	\leq C \|c_k f_k\|_{L^p(\omega^n)} \, [Vol_\omega (\{| \varphi - \psi | > \varepsilon\})]^\frac{1}{q}
\end{align*}
(since $\omega_\varphi^n = c_k f_k \omega^n$).  We have already  seen that $c_k \, f_k$ is uniformly
bounded in $L^p (\omega ^n )$.
Hence, for fixed $0< s=t = D < \varepsilon_0$,
\begin{align*}
	 cap_{\omega} (U(\varepsilon, D)) 
\,&	 \leq D^{-n} C(n) \| c_k \, f _k\|_{L^p(\omega^n)} \, [Vol_\omega (\{| \varphi - \psi | > \varepsilon\})]^\frac{1}{q} \\
\,&	 \leq C_2 \, [Vol_\omega (\{| \varphi - \psi | > \varepsilon\})]^\frac{1}{q},
\end{align*}
where $C_2$ is a constant independent of $j , k$. Next, we  apply Theorem~\ref{kappa}, after  taking  values of  $\kappa$ of both sides
of the above inequality  
\[
	D \leq \kappa \left[ cap_{\omega} (U(\varepsilon, D)) \right]
	\leq \kappa \left[ C_2 \, [Vol_\omega (\{| \varphi - \psi | > \varepsilon\})]^\frac{1}{q} \right]. 
\]
This leads to a contradiction because 
$\lim_{x\rightarrow 0^+}\kappa(x) =0 $, and
\[
	Vol_\omega (\{| \varphi - \psi | > \varepsilon\}) 
	= Vol_\omega (\{ |u_k - u_j | > \varepsilon \} ) \rightarrow 0 \quad \quad
	\mbox{ as } \quad k, j \rightarrow +\infty.
\]
Thus, $S_{kj} \rightarrow 0$ as $k, j \rightarrow +\infty$.  Also  $M_{kj} \rightarrow 0$ as $k, j \rightarrow +\infty$ since $M_{kj} = - S_{jk}$.
Hence, 
\[
	|u_k - u_j | \leq |S_{kj}| + |M_{kj}| \rightarrow 0
	\quad \mbox{ as } \quad
	k,j \rightarrow +\infty.
\]
We conclude that $\{ u_j\}_{j=1}^\infty$ is a Cauchy sequence in $PSH(\omega) \cap C(X)$.  Let $u$ and $c$ be the  limit points of $\{u_j\}$ and $\{c_j\}$ respectively. Then
the continuous function $u \in PSH(\omega) \cap C(X)$  solves
\[
	(\omega + dd^c u)^n = c \, f \, \omega^n,
\]
in the weak sense of currents.
\end{proof}

It is worth to record here that from the above argument we get a weak  stability statement. 
\begin{corollary}
\label{Linfty-L1}
Let $\{u_j\}_{j=1}^\infty \subset PSH(\omega) \cap C(X)$ be such that
$\sup_X u_j =0$. 
Suppose that for every $j \geq 1$,
\[
	\omega_{u_j}^n = f_j \omega^n,
\]
where $f_j$'s are uniformly bounded in $L^p (\omega^n)$, $p>1$. If $\{ u_j \}$ is 
Cauchy in $L^1(\omega^n)$, then it is Cauchy in $PSH (\omega) \cap C(X)$.
\end{corollary}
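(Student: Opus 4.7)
The plan is to recycle almost verbatim the Cauchy-sequence argument in the proof of Theorem~\ref{existence}, noting that it used only two ingredients about $\{u_j\}$: a uniform $L^p$ bound on the right-hand sides $\omega_{u_j}^n/\omega^n$ and convergence in $\omega^n$-measure of $\{u_j\}$. Both are available to us, the latter being a consequence of $L^1$-Cauchy.

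First I would invoke Corollary~\ref{fomula-Linfty-bound}: the uniform $L^p$ bound on $\{f_j\}$ gives a single admissible function $h$ dominating all $\omega_{u_j}^n$ in the sense of \eqref{magrowth} (via Corollary~\ref{lp-density}), and hence a single $H>0$ with $-H\le u_j\le 0$ for every $j$. Rescaling, we may assume $-1\le u_j\le 0$; note that this rescaling preserves both Cauchyness in $L^1(\omega^n)$ and Cauchyness in $C(X)$, and yields a single function $\kappa$ of Theorem~\ref{kappa} with $\lim_{x\to 0^+}\kappa(x)=0$ that applies to every $u_j$.

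Next I would argue by contradiction. If $\{u_j\}$ is not Cauchy in $C(X)$, then for some $0<\varepsilon<1$ and arbitrarily large indices either $S_{kj}:=\inf_X(u_k-u_j)\le -4\varepsilon$ or, symmetrically (swapping $k$ and $j$), $S_{jk}\le -4\varepsilon$. Assume the former and write $\varphi:=u_k$, $\psi:=u_j$, $m(\varepsilon):=\inf_X[\varphi-(1-\varepsilon)\psi]$, and $U(\varepsilon,s):=\{\varphi<(1-\varepsilon)\psi+m(\varepsilon)+s\}$. Exactly as in Theorem~\ref{existence}, the normalization $-1\le \varphi,\psi\le 0$ together with $m(\varepsilon)\le S_{kj}\le -4\varepsilon$ yields, for $0<s,t<\varepsilon_0$, the inclusion $U(\varepsilon,s+t)\subset\{|u_k-u_j|>\varepsilon\}$. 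Remark~\ref{caplevelset} and H\"older's inequality then give
\[
t^n\,cap_\omega(U(\varepsilon,s))\le C\int_{U(\varepsilon,s+t)}f_k\,\omega^n\le C\|f_k\|_p\bigl[Vol_\omega(\{|u_k-u_j|>\varepsilon\})\bigr]^{1/q}.
\]
Fixing $s=t=D$ with $0<D<\varepsilon_0$ and applying Theorem~\ref{kappa} would yield
\[
D\le \kappa\!\left(C_2\bigl[Vol_\omega(\{|u_k-u_j|>\varepsilon\})\bigr]^{1/q}\right),
\]
where $C_2$ is independent of $j,k$. Since convergence in $L^1(\omega^n)$ implies convergence in $\omega^n$-measure, $Vol_\omega(\{|u_k-u_j|>\varepsilon\})\to 0$, and by continuity of $\kappa$ at $0$ the right-hand side tends to $0$, contradicting $D>0$.

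No step looks genuinely difficult: all inequalities are instances of results already proved. The only subtlety worth flagging is the bookkeeping in Step~1 — one must check that a single admissible $h$ (and hence a single $\kappa$) works for the whole family, so that the inequality $D\le\kappa(\cdot)$ can be applied uniformly in $k,j$; this is immediate from Corollary~\ref{lp-density} because $h_p(x)=C\|f_k\|_{L^p(\omega^n)}\exp(ax)$ depends on $k$ only through a uniformly bounded prefactor, absorbed via \eqref{changeL^infty}.
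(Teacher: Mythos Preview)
Your proposal is correct and follows exactly the approach intended by the paper, which merely records the corollary as a byproduct of ``the above argument'' (the Cauchy-sequence step in the proof of Theorem~\ref{existence}) without writing out a separate proof. Your only addition is making explicit that a single admissible $h$ (hence a single $\kappa$) serves for all $u_j$ thanks to the uniform $L^p$ bound on the $f_j$'s, which is indeed the one point that needs to be checked.
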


\bigskip



\end{document}